\newcommand{\bd}{\partial}
\newcommand{\RR}{\ensuremath{\mathbb{R}^2}}
\newcommand{\RRR}{\ensuremath{\mathbb{R}^3}}
\newcommand{\irr}{ir\-re\-du\-ci\-ble}
\newcommand{\inc}{in\-com\-press\-i\-ble}
\newcommand{\tm}{3-manifold}
\newcommand{\p}{^{\prime}}
\newcommand{\birr}{$\bd$-\irr}
\newcommand{\binc}{$\bd$-\inc}
\newcommand{\ds}{\displaystyle}
\begin{document}
\title[Concordance of Seifert surfaces]
{Concordance of Seifert surfaces}
\author{Robert Myers} 
\address{Department of Mathematics, Oklahoma State University, Stillwater, 
OK 74078}
\email{myersr@math.okstate.edu}


\subjclass[2010]{Primary 57M25.} 

\keywords{concordance, knot, Seifert surface.}

\renewcommand{\labelenumi}{(\theenumi)}
\newtheorem{thm}{Theorem}[section]

\newtheorem{cor}{Corollary}[section]
\newtheorem{lem}{Lemma}[section]
\newtheorem{prop}{Proposition}[section]

\begin{abstract} 
This paper proves that every 
oriented 
non-disk Seifert surface $F$ for 
an oriented 
knot $K$ in 
$S^3$ is smoothly concordant to a Seifert surface $F\p$ for a hyperbolic knot 
$K\p$ of arbitrarily large volume.  This gives a new and 
simpler proof of the result of Friedl and of Kawauchi that every knot is 
$S$-equivalent to a hyperbolic knot of arbitrarily large volume.  
The construction also gives a new and simpler proof of the result of   
Silver and Whitten and of Kawauchi that for every knot $K$ there is a 
hyperbolic knot $K\p$ of arbitrarily large volume and a map of pairs 
$f:(S^3,K\p)\rightarrow (S^3,K)$ which induces an epimorphism on 
the knot groups. An example is given which shows that knot Floer homology 
is not an invariant of Seifert surface concordance. 
The paper also proves that a set of finite volume hyperbolic 3-manifolds with 
unbounded Haken numbers has unbounded volumes.  
\end{abstract}

\maketitle

\section{Introduction}

In what follows the smooth category will always be assumed. 
This paper concerns two equivalence relations on 
oriented 
knots in $S^3$, 
concordance and $S$-equivalence. Knots $K$ and $K\p$ are \textit{concordant} 
if there is a properly embedded oriented 
annulus $A$ in $S^3\times[0,1]$ with 
$A\cap (S^3\times\{0\})=K$ and $A\cap (S^3\times\{1\})=K\p$ 
such that 
$\partial A=K-K\p$. 
Knots 
$K$ and $K\p$ are \textit{$S$-equivalent} if they have Seifert surfaces $F$ and 
$F\p$ with associated Seifert matrices which are equivalent under integral 
congruence and elementary expansions and contractions \cite{Trotter 73}. \\

Concordant knots need not be $S$-equivalent, e.g.\ the trivial knot 
and a slice knot with non-trivial Alexander polynomial, such as the 
stevedore's knot $6_1$. $S$-equivalent knots need not be concordant; 
Kearton \cite{Kearton 04} has shown that every algebraically slice 
knot is $S$-equivalent to a slice knot, but by Casson and Gordon 
\cite{Casson-Gordon} 
there are algebraically slice knots which are not slice knots. \\

The author \cite{Myers 83} proved that every knot is concordant 
to a hyperbolic knot, 
generalizing the result of Kirby and Lickorish \cite{Kirby-Lickorish 79} 
that every knot 
is concordant to a prime knot. Friedl \cite{Friedl 09} and  
Kawauchi \cite{Kawauchi 89a, Kawauchi 89b, Kawauchi 94} 
have given proofs that every knot is 
$S$-equivalent to a hyperbolic knot, generalizing the result of 
Kearton \cite{Kearton 04} that every knot is $S$-equivalent to a prime knot. \\

Friedl noted, citing work of Kearton \cite{Kearton 75, Kearton 04}, 
Levine \cite{Levine 70, Levine 77}, and Trotter \cite{Trotter 73}, that two knots are $S$-equivalent if and only if they have 
isometric Blanchfield pairings. 
He then noted that by combining two results 
of Kawauchi's imitation theory of knots (Theorem 1.1 of \cite{Kawauchi 89b} 
and Properties I and V of \cite{Kawauchi 89a}) one gets that for every knot 
$K$ there is a hyperbolic knot $K\p$ of arbitrarily large volume and a map 
$f:(S^3,K\p)\rightarrow(S^3,K)$ which induces isomorphisms on every quotient 
of the knot groups by their derived subgroups. Friedl then showed that 
this result implies that the Blanchfield pairings are isometric. 
He also added a note in proof that one can combine the existence 
of such knots and maps with another result of Kawauchi (Theorem 2.2 of 
\cite{Kawauchi 94}) to show $S$-equivalence. \\

It is natural to ask whether for every knot $K$ there is a hyperbolic knot 
$K\p$ of arbitrarily large volume to which $K$ is both $S$-equivalent 
and concordant. It turns out that an affirmative answer is implicit 
in Kawauchi's construction in \cite{Kawauchi 89b}. One can see the 
concordance by looking at Figure 7 of that paper for the time interval 
$0\leq t\leq 1$.\\

Silver and Whitten \cite{Silver-Whitten 04} proved that given any 
triple $(G,\mu,\lambda)$ where $G$ is a knot group and $(\mu,\lambda)$ is a 
meridian-longitude pair for $G$ there are infinitely many triples 
$(\widetilde{G},\widetilde{\mu},\widetilde{\lambda})$ where $\widetilde{G}$ 
is the group of a prime knot and there is an epimorphism 
$\phi:(\widetilde{G},\widetilde{\mu},\widetilde{\lambda})\longrightarrow(G,\mu,\lambda)$. 
In \cite{Silver-Whitten 05} they strengthened this to the $\widetilde{G}$ 
being the groups of hyperbolic knots of arbitrarily large volume. 
In a note in proof they added the comment that Kawauchi 
informed them that many of the results in the paper can be found 
in \cite{Kawauchi 89b} and \cite{Kawauchi 92}.\\

The constructions of Kawauchi and of Silver and Whitten mentioned above are 
rather intricate and the proofs somewhat lengthy. In the present paper the 
author gives a simpler and shorter construction and proof. Recall 
that a \textit{Seifert surface} $F$ for a knot $K$ in $S^3$  is a compact, 
oriented surface with boundary $K$. Both $F$ and $K$ will be assumed to 
be oriented, with $\partial F=K$. A Seifert surface $F\p$ for a 
knot $K\p$ will be said to be \textit{concordant} to $F$ if there is 
an  embedding  $h:F\times [0,1]\rightarrow S^3\times [0,1]$ 
such that $h(F\times\{0\})=h(F\times[0,1])\cap (S^3\times\{0\})=F$,  
$h(F\times\{1\})=h(F\times[0,1])\cap (S^3\times\{1\})=F\p$, and there is an 
orientation of $h(F\times [0,1])$ such that $(S^3\times\{0\})\cap
\bd h(F\times [0,1])=F$ and $(S^3\times\{1\})\cap
\bd h(F\times [0,1])=-F\p$. In this case $K$ and $K\p$ are clearly 
concordant. \\ 

They are also $S$-equivalent, which can be seen as follows.\\
 
Let $N$ be a regular neighborhood of $h(K\times[0,1])$ in $S^3\times[0,1]$. 
Let $P$ be the closure of $h(F\times[0,1])-N$. Let 
$Q=h(F\times[0,1])\cap P$. Finally let $R$ be a regular neighborhood 
of $Q$ in $P$. Then $R$ is homeomorphic to $Q\times[-1,+1]$ and hence 
to $F\times[-1,+1]$. Thus one gets a product structure of the form 
$k:F\times[0,1]\times[-1,1]\rightarrow R$ on $R$. 
By abuse of notation this will be denoted by $R=F\times[0,1]\times[-1,1]$. 
Thus we identify $F$ with $F\times\{0\}\times\{0\}$ and $F\p$ with 
$F\times\{1\}\times\{0\}$. \\

Now choose a collection of oriented simple closed curves $a_i$ on $F$
which represents a basis for $H_1(F)$. Identify $a_i$ with 
$a_i\times\{0\}\times\{0\}$. Let $a_i^+=a_i\times\{0\}\times\{+1\}$; 
this is regarded as $a_i$ pushed off $F$ in the positive normal direction. 
Let $v_{i,j}$ be the linking number of $a_i$ and $a_j^+$. The resulting 
matrix is a Seifert matrix $V$ for $K$. Now let $b_i=a_i\times\{1\}\times\{0\}$ 
and $b_i^+=a_i\times\{1\}\times\{+1\}$. Letting $v\p_{i,j}$ be the linking 
number of $b_i$ and $b_i^+$ one gets a Seifert matrix $V\p$ for $K\p$.
Let $A_i$ be the annulus $a_i\times[0,1]\times\{0\}$. 
Let $A_i^+$ be the annulus $a_i\times[0,1]\times\{+1\}$. 
Then $A_i$ joins $a_i$ to $b_i$ and $A_i^+$ joins $a_i^+$ to $b_i^+$.\\

Recall that if $J$ and $J^+$ are disjoint oriented 
simple closed curves in $S^3$ then they bound properly embedded oriented 
surfaces $G$ and $G^+$ in the 4-ball $B^4$ which can be chosen to 
meet in a finite number of points. The linking number of $J$ and $J^+$ 
is then equal to the algebraic intersection number of $G$ and $G^+$. 
See \cite{Rolfsen}, page 136. \\

Regard $S^3\times\{1\}$ as $\partial B^4$. Choose surfaces $G_i$ and $G_i^+$ 
in $B^4$ with boundaries $b_i$ and $b_i^+$, respectively.  
Let $\widehat{G}_i=A_i\cup G_i$ and  $\widehat{G}_i^+=A_i^+\cup G_i^+$. 
Since $A_i\cap A_j^+=\emptyset$, we have that the intersection number of 
$\widehat{G}_i$ and $\widehat{G}_j^+$ is equal to that of $G_i$ and $G_j^+$. 
It follows that the Seifert matrices $V$ of $F$ and $V\p$ of $F\p$ with 
respect to the given bases are the same. \\

The main result of this paper will now be stated. The notation 
$S^3\backslash\backslash K\p$ means the compact manifold obtained from 
$S^3$ by removing the interior of a regular neighborhood of the knot $K\p$. 
Recall that the \textit{Haken number} \cite{Haken} of a compact 3-manifold 
$M$ is the maximum number of compact, connected, 
properly embedded, \inc, boundary incompressible, 
pairwise non-parallel surfaces in $M$.\\

\begin{thm}Let $F$ be a Seifert surface for a knot $K$ in $S^3$. Assume that 
$F$ is not a disk. Then $F$ is concordant to a Seifert surface $F\p$ for a 
knot $K\p$ such that 
\renewcommand{\labelenumi}{(\alph{enumi})}
\begin{enumerate} 
\item $K\p$ is hyperbolic, 
\item $S^3\backslash\backslash K\p$ has arbitrarily large Haken number, 
\item $S^3-K\p$ has arbitrarily large volume, and 
\item there is a map of pairs $f:(S^3,K\p)\rightarrow (S^3,K)$ 
which induces an epimorphism $f_*:\pi_1(S^3-K\p)\rightarrow \pi_1(S^3-K)$.
\end{enumerate} 
\end{thm}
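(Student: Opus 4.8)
The plan is to build $F'$ from $F$ by a controlled local modification — essentially a satellite-type construction performed inside a neighborhood of $F$ — so that the modification is supported in a ball and therefore visibly extends to a product concordance $F\times[0,1]$. Concretely, I would pick an arc $\alpha$ properly embedded in $F$ that is ``spanning'' (its endpoints on $K=\partial F$, nonseparating in $F$), take a small ball $B$ meeting $F$ in a disk neighborhood of $\alpha$, and replace the trivial tangle $(B,\alpha\text{-band})$ by a more complicated tangle — adding many full twists and/or tying in a suitably chosen knotted pattern along the band. Since the replacement happens inside $B$ and does not change $F$ outside $B$, there is a tangle-replacement cobordism in $B\times[0,1]$ realizing the concordance; gluing it to the product $(F\setminus B)\times[0,1]$ gives the embedded $F\times[0,1]$ required by the definition of Seifert surface concordance, and $F'$ is automatically a Seifert surface for the resulting knot $K'$. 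The map $f:(S^3,K')\to(S^3,K)$ in (d) should come from the ``untwisting'' degree-one map that collapses the pattern back to the trivial tangle — a standard pinch map on the ball $B$, extended by the identity outside — which is degree one and hence $\pi_1$-surjective.

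The substantive work is arranging (a), (b), (c) simultaneously. I would choose the inserted pattern so that the exterior $S^3\setminus K'$ contains a large collection of disjoint, pairwise non-parallel, essential (incompressible, boundary-incompressible) surfaces — this is what ``Haken number at least $n$'' means — and this is exactly where I expect the main obstacle to lie: one must verify that these surfaces remain essential and non-parallel in the new exterior and are not destroyed by compressions or boundary-compressions coming from the rest of $F$. The natural tool is the standard innermost-disk / outermost-arc argument together with the fact that the tangle in $B$ has been chosen (by adding enough twisting, or by a Myers-style ``$\ic$'' pattern as in \cite{Myers 83}) to be sufficiently complicated; the genericity/minimal-general-position machinery and the $\db$/$\ab$ notions the author has set up in the macros suggest this is the route taken. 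Once one has arbitrarily large Haken number, hyperbolicity (a) follows by showing the exterior is irreducible, boundary-irreducible, atoroidal, and anannular — again by the usual cut-and-paste arguments localized in $B$ — and then (c), arbitrarily large volume, is a formal consequence: the last sentence of the abstract asserts precisely that finite-volume hyperbolic 3-manifolds with unbounded Haken numbers have unbounded volume, so (b) $\Rightarrow$ (c).

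In order of execution I would: (1) fix the spanning arc $\alpha$ and the ball $B$, and describe the tangle replacement and the resulting $K'$, $F'$; (2) exhibit the product concordance $F\times[0,1]$ and note $K\sim K'$, $K\equiv_S K'$ for free; (3) construct the pinch map $f$ and check it is degree one, giving (d); (4) prove the inserted pattern forces many disjoint non-parallel essential surfaces in the exterior, giving (b); (5) upgrade the essential-surface analysis to show the exterior is hyperbolic, giving (a); (6) invoke the volume-versus-Haken-number statement to deduce (c). The delicate point throughout is step (4)–(5): controlling \emph{all} essential surfaces in $S^3\setminus K'$, not just the ones we built, so that no unexpected essential annulus or torus appears and so that the Haken-number lower bound is genuine. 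I expect the proof to handle this by choosing the pattern from a family (indexed by a twisting or complexity parameter $n$) for which a \db/\ab-type lemma, proved earlier or in \cite{Myers 83}, guarantees the exterior's essential surfaces are understood, with the count growing without bound in $n$.
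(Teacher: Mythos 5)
Your central device---a small ball $B$ meeting $F$ in a disk neighborhood of a single spanning arc, with the tangle replacement supported in $B$---cannot in general force hyperbolicity of $K'$. The exterior $S^3-K'$ is built from $S^3-(K\cup\operatorname{int}B)$ by gluing in the new tangle exterior along a planar surface in $\partial B$, and any essential torus or annulus in $S^3-K$ disjoint from $B$ stays essential in $S^3-K'$: a compressing disk can be pushed off the incompressible planar surface $\partial B - N(K')$ by the usual innermost-disk argument, producing a compression back in $S^3-K$. A companion torus of a satellite $K$, or a cabling annulus, need not come anywhere near a small neighborhood of one arc of $F$, so no choice of inserted pattern can repair this. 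The paper's construction is therefore not local in your sense: Section 3 first puts $F$ in a standard plat-type position via a wedge of circles $W\subset\operatorname{int}F$, and Section 4 then chooses the replacement balls $E^+$ and $E^-$ so that they meet \emph{all} $M$ plat strands. With that global ``span the whole knot'' arrangement, the exterior of $F'$ decomposes along planar spheres-with-holes into the exteriors of the two hyperbolic tangles $\alpha$, $\gamma$ and a product over the braid $\beta$, and the Gluing Lemma applies. Relatedly, for (b) you propose increasing a twisting parameter, but twisting does not manufacture incompressible surfaces; the paper instead stacks $N$ disjoint copies of the hyperbolic tangle, whose exterior boundaries supply $N$ pairwise non-parallel incompressible surfaces (Lemma 5.1).

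For (d) your pinch-map idea is right in spirit but its existence is not free. A map $(B,\tau)\to(B,\tau^*)$ that is the identity on $\partial B$, carries $\tau$ homeomorphically to the trivial tangle, and induces a $\pi_1$-surjection needs something to crush, and the paper engineers this by requiring the replacement tangle to be a \emph{boundary tangle} in the Cochran--Orr sense: each strand $\tau_i$ bounds a surface $G_i$ in $B$, with the $G_i$ pairwise disjoint. Lemma 6.2 then builds the crush from a product neighborhood of the $G_i$, and Lemma 6.3 constructs hyperbolic boundary tangles explicitly (double the strands of a Myers tangle, slide endpoints so the doubled strands bound disjoint once-punctured tori, and invoke the Gluing Lemma). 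A generic hyperbolic tangle has no reason to admit such a collapse, so this structure must be built in from the start; your sketch treats it as a standard move, which it is not. One last caution: the ``disk-busting,'' ``annulus-busting,'' and related tools you gesture at correspond to unused macro names in the paper's preamble, not to techniques the argument actually deploys, so that part of your plan appears to be inferred from the source file rather than from the mathematics.
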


The paper is organized as follows. Section 2 reviews some basic material. 
Section 3 proves that every non-disk Seifert surface for a knot can be 
put in a certain standard position. Section 4 uses standard position 
to prove (a). Section 5 proves (b). The proof that (b) implies (c) 
follows from a more general result, that a set of finite volume hyperbolic 
3-manifolds with unbounded Haken numbers has unbounded volumes. This fact 
appears to be ``quasi-known'' but the author does not know a precise reference 
in the literature and so gives a simple proof in the Appendix. Section 6 
proves (d). Section 7 considers the question of whether Seifert surface 
concordance implies the invariance of more than just the union of the sets 
of invariants of concordance and $S$-equivalence. 
It shows by example that  
although the Alexander polynomial is an invariant of Seifert surface 
concordance its categorification, knot Floer homology, is not. \\

\section{Preliminaries}

As general references on knot theory and on 3-manifolds see 
\cite{Lickorish} and \cite{Jaco}. 
A compact, connected, orientable 3-manifold $M$ will be called 
\textit{excellent} if it is irreducible, boundary-irreducible, 
anannular, atoroidal, and is not a 3-ball. $M$ will be called 
\textit{Haken} if it contains a two-sided incompressible surface. 
By Thurston's uniformization 
theorem (see e.g. \cite{Morgan}) excellent Haken manifolds are hyperbolic.  \\

The following standard technical result will be used to build more complicated 
hyperbolic 3-manifolds out of simpler pieces. A proof can be found in Section 2 
of \cite{Myers 93}.\\

\begin{lem} [Gluing Lemma]
Let $X$ be a compact, connected 3-manifold. Suppose $F$ is a compact, properly
embedded, two-sided 2-manifold in $X$. It is not assumed that $F$ is connected. Let
$Y$ be the 3-manifold obtained by splitting $X$ along $F$. Denote by $F_1$ and $F_2$ 
the two
copies of $F$ in $\partial Y$ which are identified to obtain $X$.
If each component of $Y$ is excellent, $F_1\cup F_2$ and the closure of 
$Y - (F_1\cup F_2)$ are incompressible in $Y$, and each component of $F_1\cup  F_2$ has 
negative Euler characteristic, then $X$ is excellent.\end{lem}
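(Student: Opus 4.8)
The plan is to verify the five defining properties of excellence for $X$ one at a time, each time pushing a hypothetical "bad" surface (a sphere, a compressing disk, an essential annulus, an essential torus) into $Y$ via general position with $F$, applying the corresponding property of the excellent pieces of $Y$, and using an innermost/outermost disk argument on the curves of intersection to reduce the number of intersection components. First I would record the standard fact that, since $F_1 \cup F_2$ is incompressible and two-sided in $Y$ and $Y$ is irreducible (each component being excellent), $X$ is irreducible: an essential sphere $S$ in $X$ can be isotoped to meet $F$ in circles, an innermost such circle bounds a disk in $F$ (incompressibility) which can be used to surger $S$, reducing $|S \cap F|$, until $S$ misses $F$ and lies in some excellent component of $Y$, a contradiction. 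The same innermost-circle surgery, now applied to a compressing disk $D$ for $\partial X$, shows $X$ is boundary-irreducible: after removing intersection circles we get a compressing disk in a component of $Y$ for a surface which, by hypothesis, is incompressible in $Y$ — either $\partial_0 Y := \overline{Y - (F_1\cup F_2)}$ or, if $\partial D$ had been pushed across, a piece of $F_i$ — again a contradiction. Here the hypothesis that both $F_1 \cup F_2$ \emph{and} $\overline{Y-(F_1\cup F_2)}$ are incompressible in $Y$ is exactly what is needed to handle arcs of $D \cap F$ as well as circles.

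Next I would treat the anannular and atoroidal conditions together, since the arguments are parallel. Given an essential annulus or torus $T$ in $X$, put it in general position with $F$ so that $T \cap F$ is a collection of circles and (for the annulus) arcs. Circles that are inessential on $T$ can be removed by the innermost-disk surgery as before, using incompressibility of $F_1 \cup F_2$ in $Y$, together with irreducibility of $Y$ to kill the resulting spheres. For the annulus, arcs of $T \cap F$ with endpoints on $\partial T$ are handled using incompressibility of $\partial_0 Y$: an outermost such arc cuts off a disk in $T$ whose boundary consists of a subarc of $\partial T \subset \partial X$ and a subarc of $F$, and this gives a $\bd$-compression of $F_1 \cup F_2$ in $Y$ unless it can be used to isotope $T$ and reduce intersections; since $F_1 \cup F_2$ is incompressible and $Y$ is boundary-irreducible (excellent), such $\bd$-compressions can be traded for isotopies. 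After all intersections are removed, $T$ lies in an excellent component of $Y$, contradicting its essentiality there — one must check that an essential annulus/torus in $X$ that happens to lie in $Y$ is still essential in $Y$, which follows because a compression or boundary-parallelism in $Y$ would persist in $X$.

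The main obstacle, and the place where the hypothesis on negative Euler characteristic of each component of $F_1 \cup F_2$ is used, is ruling out the degenerate outcome in which the innermost or outermost surgeries terminate not because $T$ has been made disjoint from $F$ but because $T$ has been completely absorbed into $F$ — i.e., $T$ is isotopic into $F_1$ or $F_2$. An essential torus isotopic into $F$ would force a component of $F$ to be a torus or contain an essential torus, contradicting $\chi < 0$ together with $F$ being a surface with (the relevant pieces having) negative Euler characteristic; an essential annulus isotopic into $F$ would similarly force a component of $F$ to be an annulus or disk. I would also need to be careful that the surgeries are performed so as to genuinely decrease the complexity $|T \cap F|$ at each step (choosing innermost/outermost curves and keeping track that no new intersections are created), and that at the end the "reassembled" surface in $Y$ is isotopic in $X$ to the original $T$. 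Once all five properties are checked, Thurston's uniformization theorem (quoted in the Preliminaries) is not even needed for the statement as phrased; excellence of $X$ is the conclusion. I expect the Euler characteristic bookkeeping in the degenerate cases to be the one genuinely delicate point; everything else is the standard innermost-disk toolkit.
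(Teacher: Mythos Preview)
The paper does not actually prove this lemma; immediately after stating it, the author writes ``A proof can be found in \cite{Myers 83}'' and moves on. So there is no in-paper argument to compare against---the Gluing Lemma is quoted as a black box from the author's earlier work.

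That said, your outline is the standard cut-and-paste argument that one finds in \cite{Myers 83} and similar sources: put the offending sphere/disk/annulus/torus in general position with $F$, use incompressibility of $F_1\cup F_2$ and of $\overline{\partial Y-(F_1\cup F_2)}$ together with irreducibility and boundary-irreducibility of $Y$ to eliminate inessential intersection curves by innermost-disk and outermost-arc moves, and then invoke the anannular/atoroidal property of the excellent pieces once the surface lies in $Y$. One small correction: the role of the hypothesis $\chi<0$ on components of $F$ is not really that an essential torus or annulus might be ``absorbed into $F$''. Rather, after all inessential intersections are removed, the pieces of $T$ in $Y$ are annuli (or disks, in the annulus case) with boundary on $F_1\cup F_2$; excellence of $Y$ forces each such annulus to be boundary-parallel in $Y$, and the parallelism region lets you isotope $T$ to reduce $|T\cap F|$ further. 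The hypothesis $\chi<0$ is what guarantees that such a parallelism into $\partial Y$ actually lands in $F_1\cup F_2$ and not across a disk or annulus component of $F_i$ in a way that would obstruct the induction. You should also not forget to note that $X$ is not a $3$-ball, which follows since $X$ contains the incompressible surface $F$ of negative Euler characteristic.
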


An $n$-\textit{tangle} is the disjoint union 
$\lambda=\lambda_1\cup\cdots\cup\lambda_n$ of properly embedded arcs in 
a 3-ball $B$. This is sometimes denoted by the pair $(B,\lambda)$. It will 
always be assumed that $n\geq2$. When the specific number $n$ of arcs 
is not at issue or is clear from the context $\lambda$ will just be called a 
$\textit{tangle}$.\\

This paper will assume that $B$ is given a product structure of the form 
$[a,b]\times[c,d]\times[e,f]$ with each component $\lambda_i$ of the tangle 
$(B,\lambda)$ joining a point of $(a,b)\times(c,d)\times\{f\}$ to a point of 
$(a,b)\times(c,d)\times\{e\}$. This is done so that one may compose tangles. 
The product of the tangles $(B,\lambda)$ and $(B,\mu)$ will be obtained 
by setting $(B,\lambda)$ on top of $(B,\mu)$ so that the lower endpoint of 
each $\lambda_i$ equals the upper endpoint of each $\mu_i$. \\

The \textit{exterior} of a submanifold of a 3-manifold is the closure of 
the complement of a regular neighborhood of the submanifold. 
A knot or tangle will be called \textit{excellent} if its exterior is 
excellent. In this case by a slight abuse of language the knot or tangle 
will be called hyperbolic.\\

\section{Standard position for Seifert surfaces}

Let $F$ be a non-disk Seifert surface for a knot $K$. This section defines 
a way of presenting $F$ by a diagram in the plane, standard position. 
This presentation will be used to build concordances. \\

The first step is to define an analogue for certain graphs of a plat 
presentation of a knot.\\ 

Recall the idea of a plat presentation for a knot or link. 
Regard $S^3$ as the union of a $3$-ball $B^+$, a copy $B^0$ of $S^2\times[-1,1]$, and a 
$3$-ball $B^-$ with $B^+\cap B^0=S^2\times\{1\}$ and $B^-\cap B^0=S^2\times\{-1\}$. 
Choose $M$ unknotted, unlinked, properly embedded arcs in each of $B^+$ and $B^-$. 
Arrange them so that the projections $p^{\pm}:B^{\pm}\rightarrow D^{\pm}$ onto 
equatorial disks have no crossings and are unnested. Let $p^0:B^0\rightarrow A$ be projection of 
$B^0$ onto an annulus $A$. Join the endpoints of the arcs in $B^+$  to the endpoints 
of the arcs in $B^-$ by arcs in $B^0$ such that projection on $A$ is given by 
a braid $\beta$. In this paper the braid will be chosen so that it is an element of the 
braid group of the plane, not the braid group of the sphere. 
The knot projections will be drawn in \RR\ with regions $U$, $C$ and $L$ representing 
the projections of regions near the knot in 
$B^+$, $B^0$, and $B^-$, respectively. The map $p:\RRR\rightarrow\RR$ denotes this  
local projection. The vertical coordinate in $\RR$ is denoted by $y$. The braid is represented by a box labelled $\beta$. 
The arcs in $B^+$ and $B^-$ are represented by arcs on the top and bottom of the box, 
respectively.\\

Now suppose that $M\geq 2$. Choose $g$ such that $2\leq 2g\leq M$, 
and let $m=M-2g$. Replace $2g$ adjacent arcs in $L$ with the cone $X$ on 
their boundary points having vertex $v$ in the interior of $L$; require 
that $X$ be disjoint from the remaining arcs in $L$.  This gives a 
$1$-complex in \RRR\ consisting of a wedge of $2g$ circles with possibly 
some additional simple closed curve components. 
Restrict attention to those $\beta$ for which there 
are no such additional components.  
By changing $\beta$ one may obtain an isotopic embedding of $W$ such 
that $X$ is to the left of the arcs in $L$. This will be called a 
\textit{plat presentation of a wedge of $2g$ circles}. See Figure 1.\\

\begin{figure}[h]
\scalebox{0.25}{\includegraphics{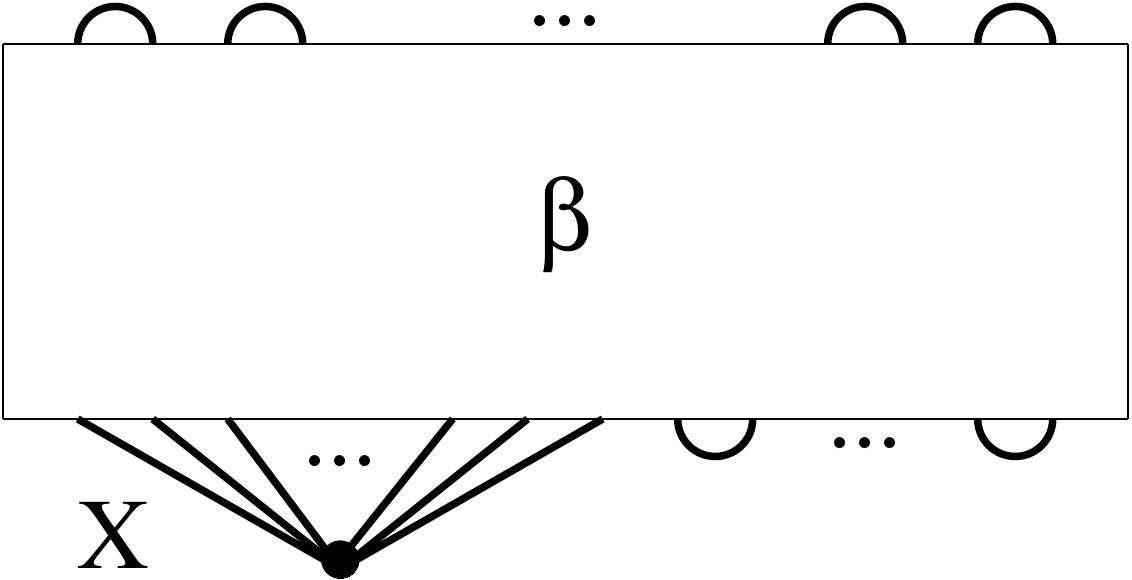}}
\caption{Plat presentation of a wedge of circles}
\end{figure}

One may add words to the top and bottom of $\beta$ to obtain an isotopic 
embedding of $W$ so that the arcs in each of $U$ and $L$ are concentrically 
nested. This will be called a \textit{standard presentation of a wedge 
of $2g$ circles}. See Figure 2.\\

\begin{figure}[h]
\scalebox{0.25}{\includegraphics{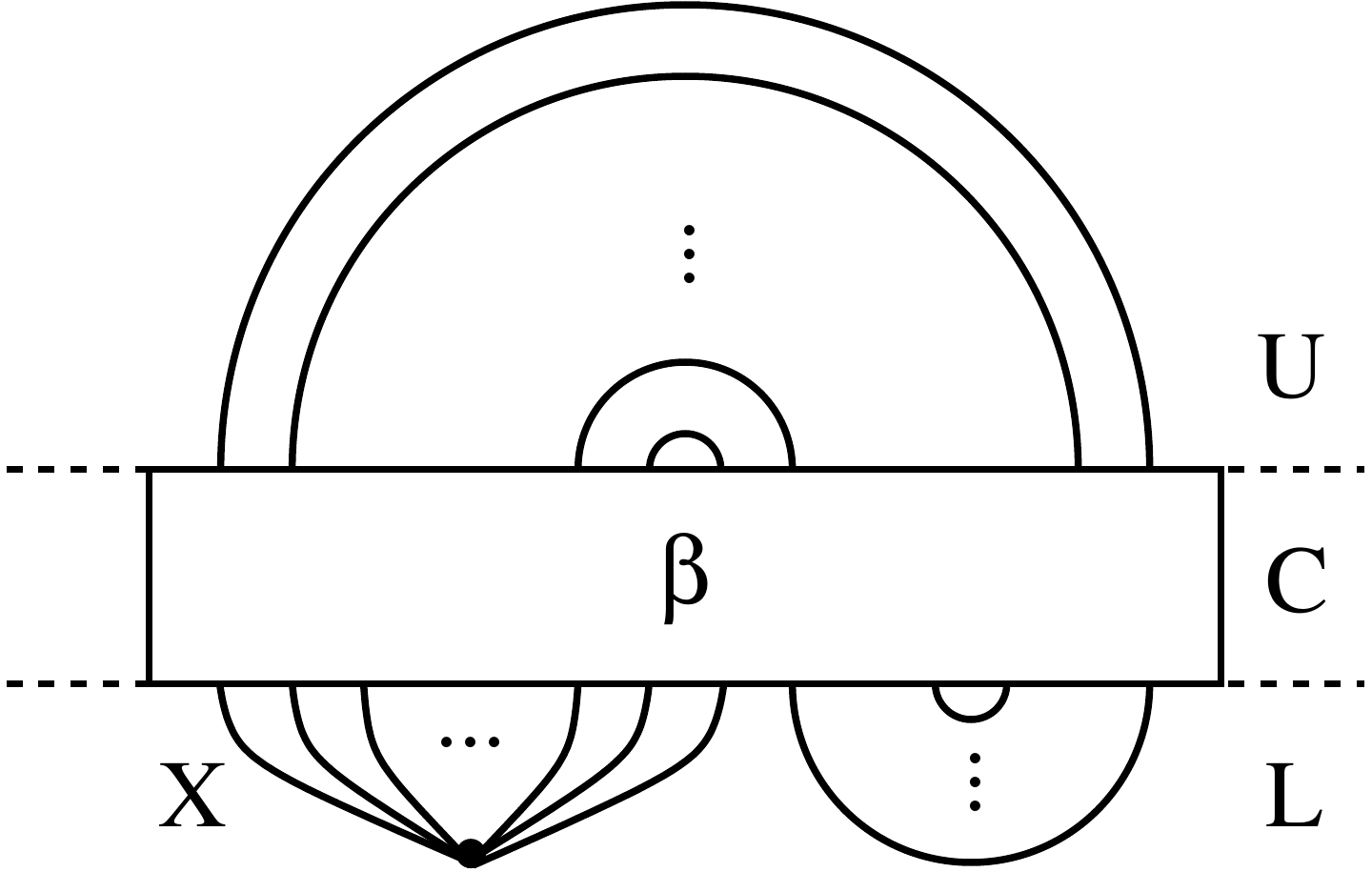}}
\caption{Standard presentation of a wedge of circles}
\end{figure}

Now take regular neighborhoods in $U\cup L$ of the arcs and $X$ and 
widen the arcs in $\beta$ to bands which cross in the same manner as 
the arcs. One gets a \textit{standard presentation of a surface with 
boundary}. The special case of interest is that of an orientable 
surface with connected boundary.\\

\begin{lem} Every non-disk Seifert surface for a knot in $S^3$ has a standard  
presentation.\end{lem}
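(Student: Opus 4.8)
The plan is to start with an arbitrary non-disk Seifert surface $F$ for $K$ and show that after an ambient isotopy of $S^3$ it sits in the standard-presentation form described above. First I would recall that any compact orientable surface with connected boundary deformation retracts onto a wedge of $2g$ circles, where $g$ is the genus of $F$; more precisely, $F$ is a regular neighborhood in $F$ of such a spine $W$, with the boundary $K = \bd F$ running once around the boundary of a tubular neighborhood of $W$ in $S^3$. So the geometric content is: realize the spine $W$ of $F$ by a plat presentation of a wedge of $2g$ circles, thicken, and check that the thickening recovers $F$ up to isotopy.

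The key steps, in order: (1) Choose a spine $W \subset \inte F$ consisting of a wedge of $2g$ circles, with the vertex $v$ and the $2g$ loops, such that $F$ is a regular neighborhood of $W$ in $S^3$ intersected appropriately — equivalently, $F$ is obtained from $W$ by replacing the vertex with a disk and the loops with bands. (2) Put the graph $W$, viewed as a knotted/linked $1$-complex in $S^3$, into a plat-like position: present $S^3 = B^+ \cup B^0 \cup B^-$, isotope $W$ so that $2g$ of its ``strands'' in $B^+$ and the cone $X$ on their endpoints in $B^-$ (to the left of the remaining arcs) realize the wedge of circles, with the braid $\be$ in $B^0$ — this is exactly the definition of plat presentation given above, and the point is that \emph{every} embedded wedge of circles admits such a presentation. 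This is the knot/graph-theoretic analogue of the classical fact that every knot has a plat presentation; I would obtain it by taking any diagram of $W$, arranging all the maxima of $W$ near the top inside $B^+$, all the minima and the vertex near the bottom inside $B^-$, and reading off the braid $\be$ from the middle level $B^0$, then pushing $X$ to the left by an isotopy changing $\be$. (3) Add words to the top and bottom of $\be$ to concentrically nest the arcs in $U$ and $L$ (Figure 2) — this only modifies $\be$, not the isotopy class of $W$. (4) Take regular neighborhoods of the arcs and of $X$ in $U \cup L$ and widen the braid arcs to bands crossing in the same pattern; this produces an embedded surface $F\p$ which, by construction, is a regular neighborhood of the spine $W$, hence has the same genus and connected boundary as $F$ and is ambient isotopic to $F$.

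The main obstacle is step (2) together with the orientability bookkeeping in step (4). For step (2), one must be careful that the ``wedge of circles'' really has \emph{no} extra closed-curve components — the excerpt restricts to those $\be$ for which the cone construction yields exactly a wedge of $2g$ circles — so one needs to verify that a suitable choice of diagram/level function for $W$ (e.g.\ a Morse-theoretic one with all index-$0$ critical points of $W$ collected at the vertex) actually achieves this; this is essentially a careful isotopy argument. For step (4), the surface produced by widening arcs to bands is orientable iff the framing/bands are chosen compatibly, and one must check that the band pattern coming from the braid $\be$ can always be taken to give an orientable surface with connected boundary matching $F$ — equivalently, that the Seifert framing is realized. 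I expect the write-up to handle this by noting that orientability and connectedness of the boundary are determined by the homology/framing data of the spine, which is preserved under the isotopies of steps (2)--(3), so the resulting standard-presentation surface is forced to be isotopic to the original $F$.
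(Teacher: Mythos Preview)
Your outline follows the same skeleton as the paper's proof (spine $W$, plat presentation via a Morse-type argument on the height function, nesting the caps and cups, then widening to bands), and steps (1)--(3) are fine. The gap is in step (4), precisely at the point you flagged as ``the main obstacle.''

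The sentence ``this produces an embedded surface $F\p$ which, by construction, is a regular neighborhood of the spine $W$, hence \ldots\ is ambient isotopic to $F$'' is not justified. Two orientable surfaces in $S^3$ with the same embedded $1$-complex $W$ as spine need \emph{not} be ambient isotopic: they can differ by full twists in the bands. The isotopies in steps (2)--(3) move $W$ as a graph; they do not by themselves force the framing that $F$ induces on $W$ to coincide with the blackboard framing used when you ``widen the arcs in $\beta$ to bands which cross in the same manner as the arcs.'' Your proposed fix---that the framing data is ``preserved under the isotopies''---is true of $F$ but says nothing about why the \emph{flat} bands of the standard presentation realize that framing. So as written you have only produced \emph{some} standard-presentation surface with spine $W$, not necessarily $F$.

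The paper's resolution is concrete: after putting $W$ in plat position, observe that the only obstruction to flat widening is that some bands carry twists; orientability of $F$ forces each such twist to be an integer number of full twists; and a full twist in a band is regularly isotopic to a curl (a kink with one extra local maximum and one extra local minimum, no twist). Push the new extrema into $U$ and $L$, absorbing the curls into the braid word $\beta$. Now every band is flat and the widened surface is genuinely $F$. You should replace your last paragraph with this twist-to-curl argument.
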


\begin{proof} Let $F$ be a genus $g\geq1$ Seifert surface for a knot. 
Choose a wedge of circles $W$ in the interior of $F$ such that 
the boundary of a regular neighborhood of $W$ 
in $F$ is parallel in $F$ to $K$. Let $v$ be the vertex of $W$. 
The number of circles is $2g$, where 
$g$ is the genus of $F$. Isotop $F$ so that the projection of 
$W$ onto \RR\ has only tranverse double point singularities. 
In particular $p^{-1}(p(v))$ consists of a single point. 
Isotop $F$ so that $p(v)$ has $y$ coordinate less than or equal to that 
of any other point of $p(W)$. We may assume that for each 
edge of $W$ all the critical values of the function $y\circ p$ are 
local maxima and minima. Isotop $F$ so as to move all the 
minima into $L$  and all the maxima into $U$. 
One now has a plat presentation of $W$. \\

The only obstruction to widening $W$ into a plat 
presentation for $F$ is that some of the bands may 
be twisted. Since $F$ is orientable the twisting in 
each band consists of a number of full twists. 
Each full twist is isotopic to a curl, as illustrated 
in Figure 3. Isotop the local maxima and local minima  
of the curls into $U$ and $L$, respectively. 
The isotopy from plat to standard presentation then 
preserves the fact that the bands are untwisted.
\end{proof}

\begin{figure}[h]
\scalebox{0.125}{\includegraphics{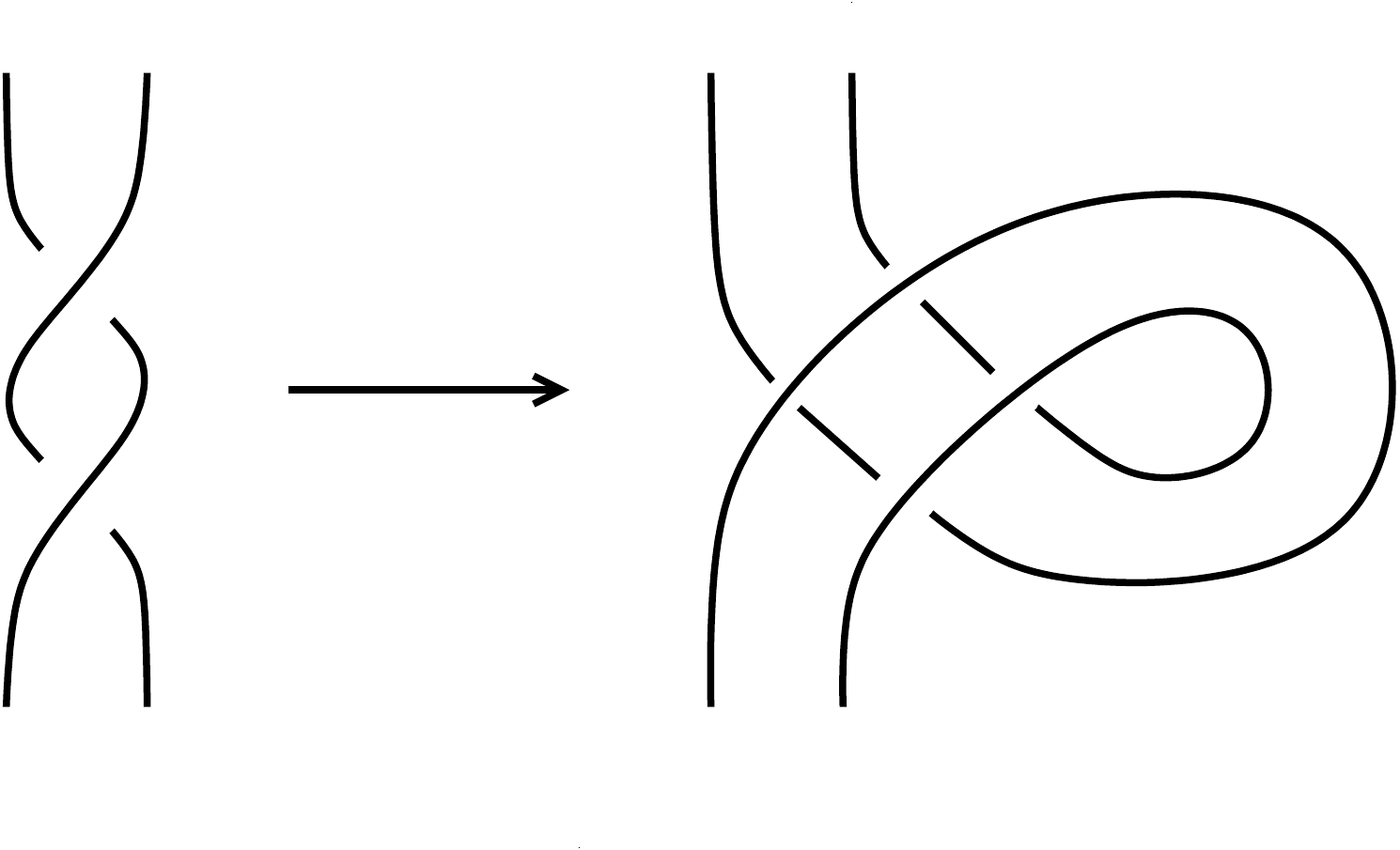}}
\caption{Replacing a twist by a curl}
\end{figure}

\section{Constructing a concordance}

The previous section showed that every non-disk Seifert surface $F$ for a knot 
in $S^3$ has a standard presentation which is obtained by widening a standard 
presentation of a wedge of circles $W$. 
The basic idea for constructing a concordance from $F$ to a surface $F\p$ is to 
construct a concordance from $W$ to some $W\p$ and widen $W\p$ to get $F\p$. 
However, the obvious surface one gets from a projection of $W\p$ might not be 
concordant to $F$. As an example of this phenomenon let $P$ be a zero 
crossing projection of the 
trivial knot and $P\p$ a one crossing projection of the trivial knot. 
Let $A$ and $A\p$ be annuli 
obtained by widening $P$ and $P\p$ into bands. These annuli cannot be concordant 
because the 
components of their boundaries have different linking numbers. 
The way to fix this is to require that any isotopies of a diagram be 
\textit{regular}, i.e. they are composed of Reidemeister moves of types II and III 
together with planar isotopies.

\begin{lem} Given a standard presentation of a non-disk Seifert surface $F$, 
there is a concordance of $F$ with a 
Seifert surface $F\p$ such that $S^3-K\p$ is hyperbolic, where $K\p=\partial F\p$.\end{lem}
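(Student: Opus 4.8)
The plan is to start from a standard presentation of $F$ and perform a controlled sequence of modifications inside the composition region $C$ (the braid box and the band crossings), realizing each modification by a regular isotopy in the plane so that it lifts to a concordance of $F$ in $S^3\times[0,1]$. The target is a new presentation whose associated surface $F\p$ has excellent exterior; we then invoke Thurston uniformization to conclude $S^3-K\p$ is hyperbolic. The basic strategy for achieving excellence is a cut-and-paste decomposition together with the Gluing Lemma: I would arrange that the exterior of $F\p$ is built by gluing together the exteriors of several tangles, each of which has been made hyperbolic, along incompressible planar pieces with negative Euler characteristic.

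Concretely, first I would insert into $C$, by a regular isotopy, a ``generic'' tangle $(B,\tau)$ of the sort studied in Myers' earlier work --- one that is known to be excellent and to have the property that composing it into any diagram still yields an excellent result. One wants this tangle to interact with all the bands of the surface simultaneously, so that no incompressible annulus or torus in the complement of $F\p$ can avoid $B$. Second, I would verify the two incompressibility hypotheses of the Gluing Lemma for the splitting of the exterior of $F\p$ along the copies of the spanning surface of the tangle: the surface pieces are planar with many boundary components (because each band contributes boundary), hence have negative Euler characteristic, and incompressibility follows from the unknottedness/unlinkedness built into the standard presentation (the arcs in $U$ and $L$ are concentrically nested) plus the choice of $\tau$. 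Third, applying the Gluing Lemma gives that the exterior of $F\p$ is excellent, hence hyperbolic, and since every move used was a regular isotopy of the diagram, the trace of the isotopy in $S^3\times[0,1]$ is the required concordance $F\times[0,1]\hookrightarrow S^3\times[0,1]$ from $F$ to $F\p$.

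The main obstacle, and the step I expect to require the most care, is the second one: checking that after inserting $\tau$ every essential annulus and torus (and every boundary-compressing and compressing disk) in the complement must meet $B$, and that the induced surfaces $F_1\cup F_2$ and the closure of the complementary part are incompressible in $Y$. This is where the geometry of the standard presentation is really used --- the concentric nesting of the arcs in $U$ and $L$ means the ``outside'' pieces are themselves simple (essentially trivial tangles or handlebodies), so any compression would have to be absorbed by $\tau$, contradicting its excellence. A secondary technical point is to make sure the insertion of $\tau$ into $C$ can genuinely be done by Reidemeister II/III moves and planar isotopy only (no Reidemeister I), so that linking numbers of the boundary curves are preserved and the concordance is honest; this is exactly the regularity condition emphasized just before the lemma, and it is arranged by introducing $\tau$ together with its mirror or an inverse in a cancelling pair and then sliding only one copy into position.
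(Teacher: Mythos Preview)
There are two genuine gaps in your plan.

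First, the concordance cannot be achieved by regular isotopy alone. A regular isotopy of the diagram lifts to an ambient isotopy of the surface in $S^3$, so $F\p$ would be isotopic to $F$ and $K\p=K$; in particular if $K$ is not hyperbolic neither is $K\p$. The paper instead replaces trivial sub-tangles by tangles of the form $\delta(\tau)=$ (hyperbolic tangle $\tau$) composed with its mirror image. Such a $\delta(\tau)$ is \emph{concordant} to the trivial tangle --- via $1$-handle additions, then Type~II Reidemeister moves, then $2$-handle additions, i.e.\ a ribbon concordance --- but it is certainly not isotopic, let alone regularly isotopic, to the trivial tangle. Your remark about ``introducing $\tau$ together with its mirror'' is the right germ of an idea, but its effect is a genuine concordance, not a regular isotopy; the regularity condition highlighted before the lemma governs only the isotopy stage in the middle of that ribbon movie, so that the widened bands track correctly.

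Second, and more seriously, your application of the Gluing Lemma does not go through. That lemma requires \emph{every} component of the split manifold $Y$ to be excellent. If you insert a single hyperbolic tangle into $C$ and split along the punctured sphere $\partial B$, the piece outside $B$ still contains the exteriors of the trivial arcs in $U$ and $L$; these are handlebodies, hence boundary-reducible, hence not excellent. Your claim that ``any compression would have to be absorbed by $\tau$'' is backwards: a compressing (or $\partial$-compressing) disk on the handlebody side lies entirely outside $B$, and the excellence of $\tau$ says nothing about it. The paper's key move is to insert hyperbolic tangles in \emph{both} $U$ and $L$ (not in $C$), positioned so that the exterior of $W\p\cap U$ is precisely the exterior of the inserted tangle $\alpha$, and --- after a short endpoint-sliding argument collapsing the cone $X$ --- the exterior of $W\p\cap L$ is precisely the exterior of $\gamma$. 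The braid piece in $C$ is a product and is absorbed into the gluing surface. Now every piece in the decomposition of $S^3-F\p$ is excellent and the Gluing Lemma applies: once to conclude $S^3-F\p$ is excellent, and then once more (splitting $S^3-K\p$ along $F\p$) to conclude $S^3-K\p$ is excellent, hence hyperbolic.
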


\begin{proof} By  the gluing lemma it will be sufficient to show that 
$S^3-F\p$ is excellent.\\

Let $W$ be the wedge of circles of which $F$ is a regular neighborhood.  
Choose a 3-ball $E^+$ in $U$ and a 3-ball $E^-$ in $L$ as shown in 
Figure 4. $E^+$ meets $C$ in a disk and meets $W$ in $M$ straight arcs; these 
arcs meet $C$ in the leftmost $M$ points of $W\cap U\cap C$. 
$E^-$ meets $C$ in a disk and meets $W$ in all but the first endpoint of $X$ and in the 
leftmost  $M-2g$ endpoints of the arcs in $(W\cap L)-X$.

\begin{figure}[h]
\scalebox{0.25}{\includegraphics{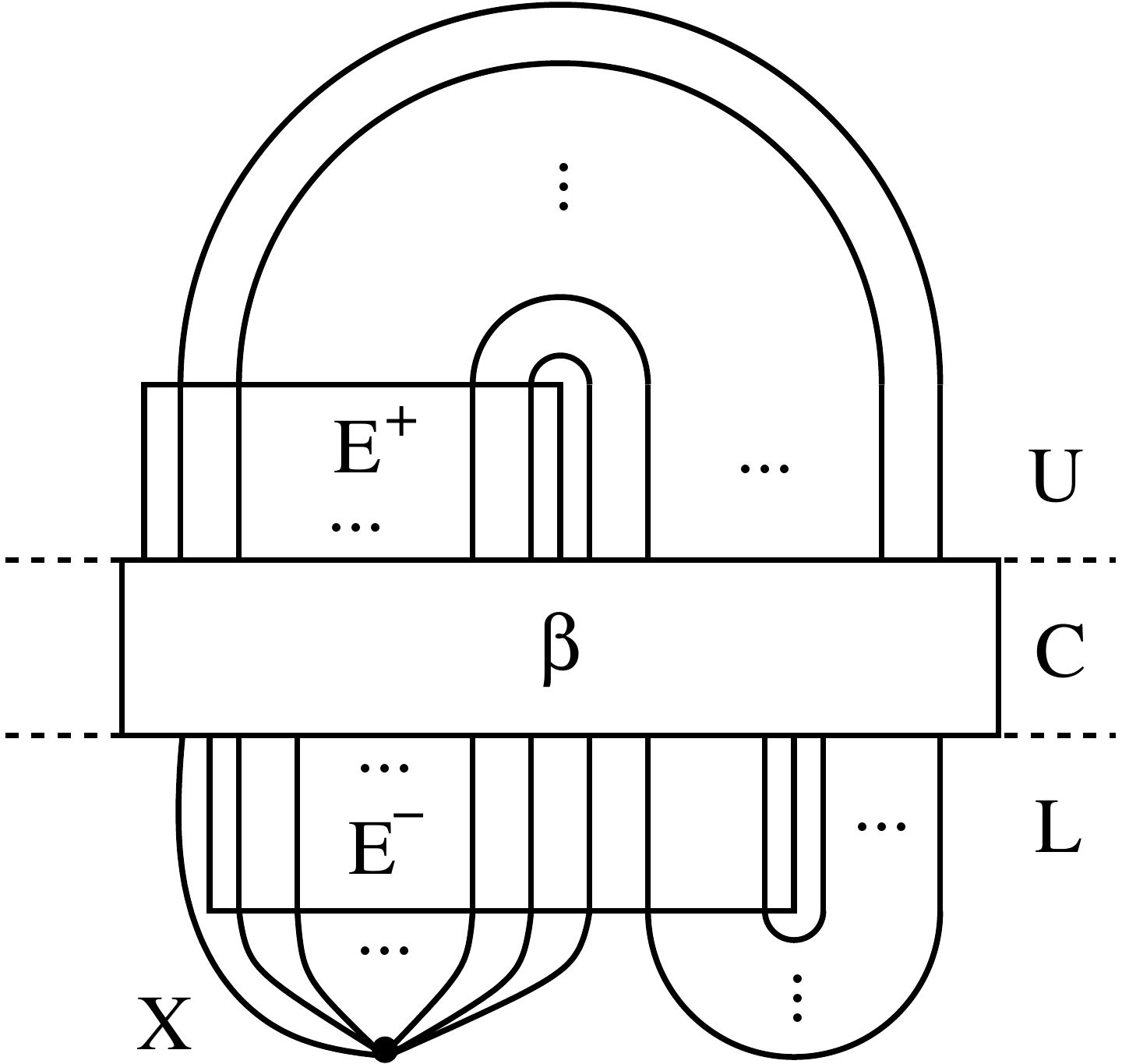}}
\caption{$W$ before surgery}
\end{figure}

The trivial tangles in $E^+$ and in $E^-$ are then replaced by concordant hyperbolic tangles 
$\alpha$ and $\gamma$, respectively. 
The process starts with a hyperbolic $n$-tangle constructed in 
\cite{Myers 83}. Figure 5 shows the $n=4$ case. \\

\begin{figure}[h]
\scalebox{0.25}{\includegraphics{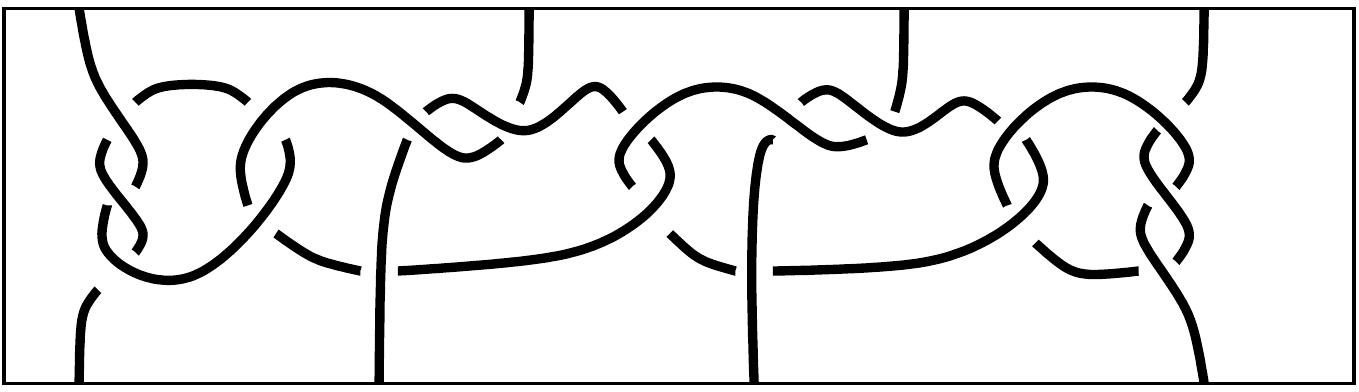}}
\caption{A hyperbolic $n$-tangle, $n=4$}
\end{figure}

This tangle is then composed with its mirror image as in Figure 6. 
By the gluing lemma the new tangle has an excellent Haken 
exterior and is thus hyperbolic.\\

\begin{figure}[h]
\scalebox{0.25}{\includegraphics{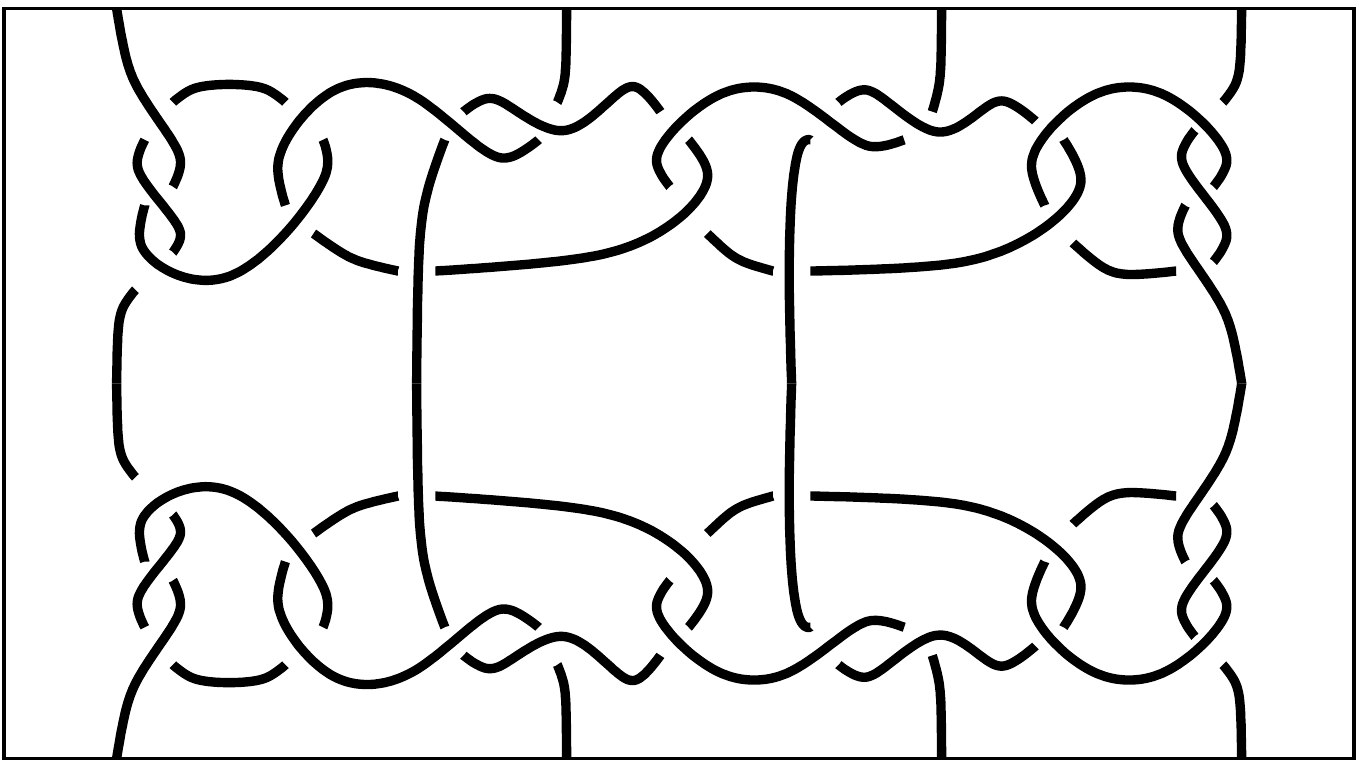}}
\caption{A ribbon $n$-tangle, $n=4$}
\end{figure}

A concordance to the trivial tangle is then constructed by attaching 
$1$-handles, 
performing Type II Reidemeister moves, and 
then attaching $2$-handles. See Figure 7 for the first stage.\\

\begin{figure}[h]
\scalebox{0.25}{\includegraphics{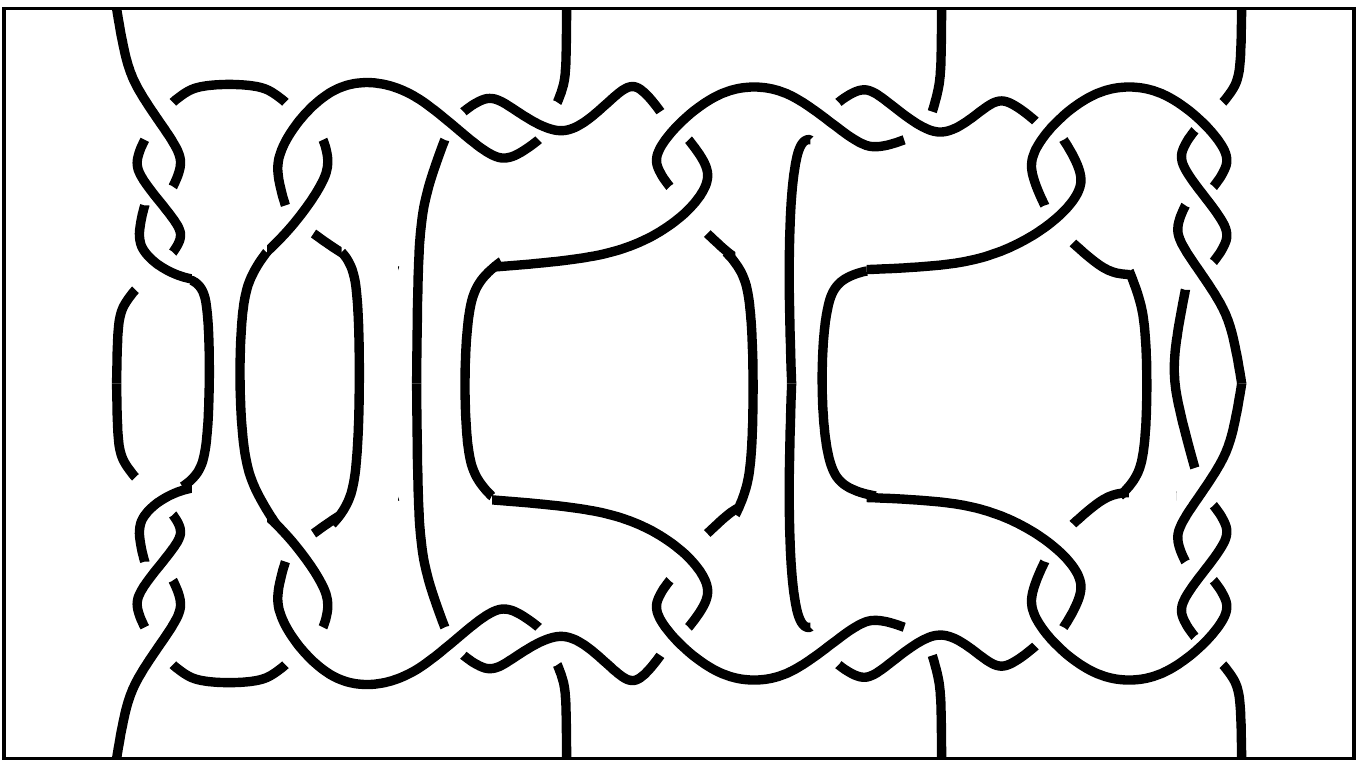}}
\caption{A ribbon concordance, $n=4$}
\end{figure}

For future reference the general version of this result is stated below.\\

\begin{lem} Let $D$ be a disk. 
Let $\tau=\tau_1\cup\cdots\cup\tau_n$ be an $n$-tangle in 
$D\times[0,1]$ such that each $\tau_i$ has one boundary point in 
$int(D)\times\{0\}$ and the other in $int(D)\times\{1\}$. Let 
$\delta(\tau)$ be the tangle in $D\times[-1,1]$ obtained by 
taking the union of $\tau$ and its mirror image in $D\times[-1,0]$. 
Then $\delta(\tau)$ is concordant to the trivial tangle $\varepsilon$.
\hspace{0.75in}$\Box$\end{lem}

Continuing with the proof of Lemma 4.1 
one next constructs the Seifert surface $F\p$ be replacing the disjoint 
union of untwisted bands $F\cap(E^+)$ by the disjoint union of 
untwisted bands in $E^+$ whose centerlines form the tangle $\alpha$. 
These bands are chosen so that their intersections with $\partial E^+$ 
are the same as those of $F$ with $\partial E^+$. A similar construction 
in $\partial E^-$ then completes the construction of $F\p$. \\

One now shows that $S^3-F\p$ is hyperbolic. 
Figure 8 shows the new wedge of circles $W\p$. The exterior of $W\p\cap U$ 
is homeomorphic to the exterior of the tangle $\alpha$ and is 
therefore hyperbolic. The exterior of $W\p\cap L$ is homeomorphic 
to the exterior of the tangle $\gamma$. This can be seen as follows. 
Denote the arcs in $X$ by $\gamma_1$, $\ldots$ $\gamma_k$, numbered 
from left to right. Slide the lower endpoint of $\gamma_2$ along $\gamma_1$ 
and into $C\cap L$. Continue with $\gamma_3$ through $\gamma_{2g-1}$. 
The complement in $L$ of the new set of arcs is homeomorphic to the 
complement of $\gamma$ and is therefore hyperbolic. Since the exterior of 
$\beta$ in $C$ is a product the result follows.\end{proof}

\begin{figure}[h]
\scalebox{0.25}{\includegraphics{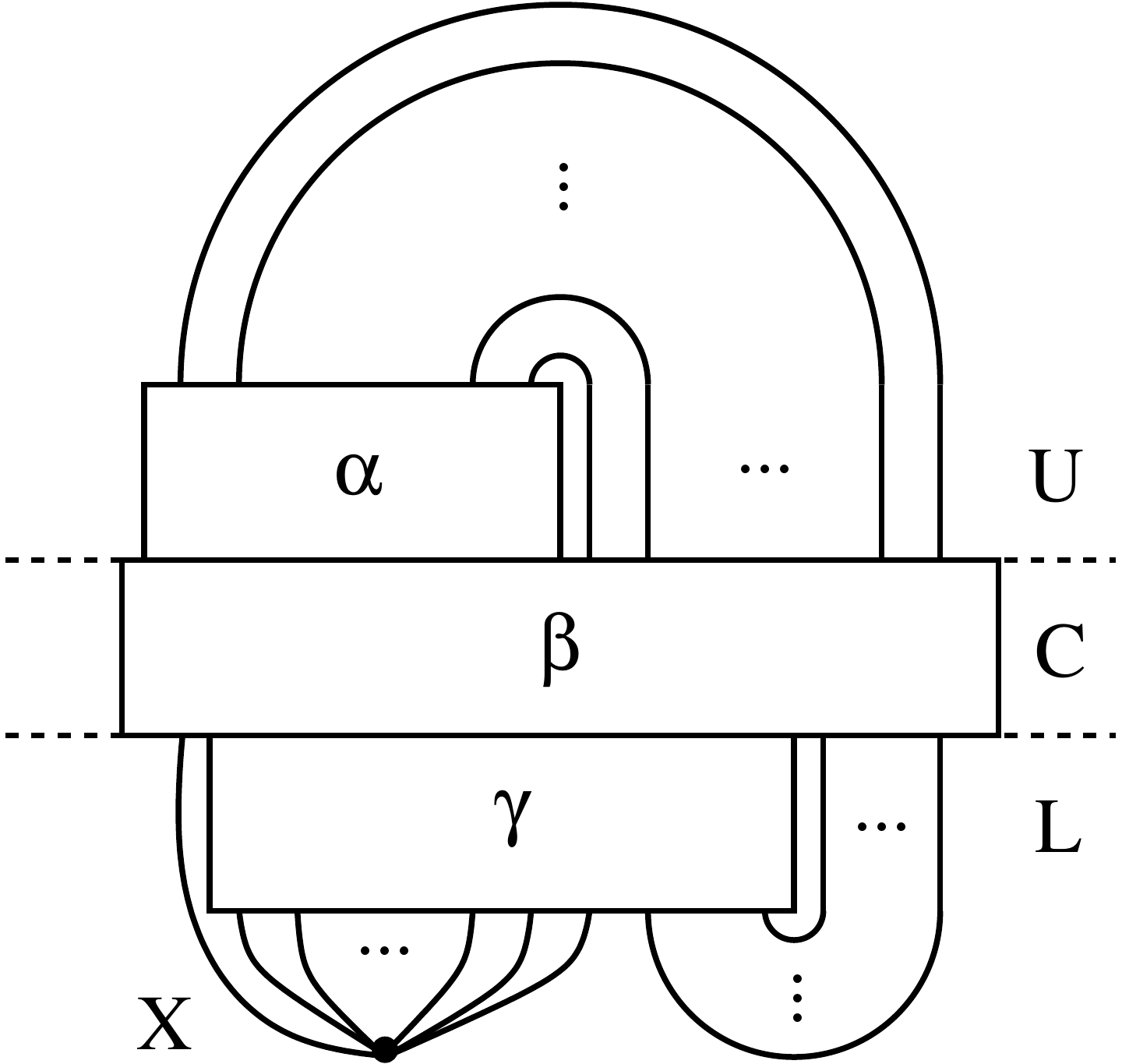}}
\caption{$W\p$ after surgery.}
\end{figure}

\section{Raising the Haken number and the volume}

\begin{lem} Given a standard presentation of a non-disk Seifert surface $F$, 
and given an $N>0$ there is a concordance of $F$ with a 
Seifert surface $F\p$ such that $K\p$ is hyperbolic and $S^3-K\p$ 
has Haken number at least $N$. \end{lem}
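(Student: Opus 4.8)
The plan is to rerun the argument in the proof of Lemma 4.1 almost verbatim, replacing the single hyperbolic tangle $\al$ inserted into $E^+$ by a much ``longer'' hyperbolic tangle built by composing many copies of $\al$, and leaving $E^-$ and $\ga$ untouched. Recall that $\al$ was the ribbon tangle $\de(\tau)=\tau\cup\bar\tau$ built from the hyperbolic $M$-tangle $\tau$ of \cite{Myers 83}, and that by Lemma 4.2 it is concordant to the trivial $M$-tangle $\ep$. We may choose the standard presentation of $F$ so that $M\geq 3$. Given $N>0$, let $\tau_N=\de(\tau)^{*(N+1)}$ be the $(N+1)$-fold composite of $\de(\tau)$ with itself, and insert $\tau_N$ into $E^+$ in place of $\al$.

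Two properties of $\tau_N$ then have to be checked, and both are easy. First, $\tau_N$ is again concordant to $\ep$: the concordance from $\de(\tau)$ to $\ep$ constructed in Lemma 4.2 fixes the top and bottom faces of the tangle ball pointwise (the $1$-handles, the Type~II Reidemeister moves, and the $2$-handles are all performed in the interior), so $N+1$ copies of it may be stacked to produce a concordance from $\tau_N=\de(\tau)^{*(N+1)}$ to $\ep^{*(N+1)}=\ep$. Hence the concordance of Lemma 4.1, carried out with $\tau_N$ in the role of $\al$, still yields a Seifert surface $F\p$ concordant to $F$. Second, $\tau_N$ is again hyperbolic: its exterior is the union of $N+1$ copies of the excellent exterior of $\de(\tau)$, glued successively along copies of the $M$-punctured disk $D_0$ cut from a spanning disk by the $M$ strands; each such $D_0$ is incompressible in each adjacent piece and has $\chi(D_0)=1-M\leq -2$, so iterating the Gluing Lemma shows the exterior of $\tau_N$ is excellent. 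Since the exterior of $W\p\cap U$ is now this excellent manifold, the rest of the proof of Lemma 4.1 --- decomposing $S^3-F\p$ along the incompressible surfaces over which the images of $U$, $C$, $L$ are glued, using the Gluing Lemma to conclude that $S^3-F\p$ is excellent, and applying it once more to conclude that $S^3-K\p$ is hyperbolic --- goes through unchanged, giving (a) again.

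The new point is that the composite $\tau_N$ forces many disjoint incompressible surfaces into $S^3-K\p$. Inside $E^+$ the tangle $\tau_N$ is separated by $N$ disjoint copies $S_1,\dots,S_N$ of a punctured sphere, where $S_i=\partial B_i$ with $B_i\subseteq E^+$ the sub-ball containing the lowest $i$ copies of $\de(\tau)$, regarded in $S^3-K\p$ after deleting its intersection with a regular neighborhood of $K\p$. Each $S_i$ should be an essential surface in $S^3-K\p$: incompressibility is forced by the same Gluing Lemma bookkeeping used above, since after the decomposition is reorganized each $S_i$ is a boundary surface of an excellent piece; and the $S_i$ are pairwise non-parallel and non-$\partial$-parallel because the piece of the decomposition lying between $S_i$ and $S_{i+1}$ is built from the exterior of a single copy of $\de(\tau)$ and is excellent --- in particular it is $\partial$-irreducible, whereas a product region $S_i\times I$ with $\partial S_i\neq\ns$ is $\partial$-reducible. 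Thus $S^3-K\p$ contains at least $N$ disjoint, pairwise non-parallel, essential surfaces, so its Haken number is at least $N$; since $N$ was arbitrary, this is (b).

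The step that genuinely requires care --- indeed the only part that is more than a mechanical reprise of Lemma 4.1 --- is this last one: verifying that the level surfaces $S_i$ really are essential and pairwise non-parallel \emph{in the closed-up manifold} $S^3-K\p$, and not merely in the tangle exterior they come from. This forces one to keep track of how these surfaces meet $K\p$, to pass correctly between $S^3-F\p$ and $S^3-K\p$, and to arrange the pieces of the decomposition so that the hypotheses of the Gluing Lemma hold at each stage; it also requires fixing, and checking compatibility with, the precise definition of the Haken number being used. I expect this bookkeeping to be the main obstacle, everything else being a routine variation on the constructions of Sections 3 and 4.
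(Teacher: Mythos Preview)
Your proof is correct and takes essentially the same route as the paper: stack copies of the ribbon tangle and exhibit the level punctured spheres separating consecutive copies as the required incompressible, pairwise non-parallel surfaces in $S^3-K\p$. The only difference is cosmetic---the paper stacks $N$ copies in both $E^+$ and $E^-$ while you stack in $E^+$ alone---and your outline of the essentiality and non-parallelism verification (excellent pieces on each side, product regions being $\bd$-reducible) is actually more detailed than the paper's one-line assertion.
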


\begin{proof} In the construction of the previous section replace each 
of the tangles $\alpha$ and $\gamma$ by $N$ copies of itself stacked 
one on top of the other. This gives $N$ disjoint copies of the exteriors 
of $\alpha$ and $\gamma$. The boundaries of the exteriors of these tangles 
are all incompressible and non-parallel in $S^3-K\p$. By the gluing lemma 
the exterior of the new knot is an excellent Haken manifold and it follows 
that $K\p$ is hyperbolic.\end{proof}

\begin{lem}  Given a standard presentation of a non-disk Seifert surface $F$, 
and given a $V>0$ there is a concordance of $F$ with a 
Seifert surface $F\p$ such that $K\p$ is hyperbolic and $S^3-K\p$ 
has volume at least $N$. \end{lem}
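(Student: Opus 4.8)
The plan is to derive Lemma 5.2 directly from Lemma 5.1 together with the general fact, proved in the Appendix, that a set of finite volume hyperbolic \tm s with unbounded Haken numbers has unbounded volumes. (The conclusion of the statement should of course read ``volume at least $V$''.) The first step is to put that fact into the form we need. Its contrapositive says that a set of finite volume hyperbolic \tm s of bounded volume has bounded Haken numbers; applying this to the set of \emph{all} finite volume hyperbolic \tm s of volume at most $V$ yields an integer $N = N(V)$ with the property that every finite volume hyperbolic \tm\ of Haken number at least $N$ has volume exceeding $V$. Indeed, were there such a manifold of volume $\leq V$, the set of finite volume hyperbolic \tm s of volume $\leq V$ would contain manifolds of arbitrarily large Haken number, contradicting the general fact.

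With $N = N(V)$ fixed, apply Lemma 5.1 to the given standard presentation of $F$: this produces a Seifert surface $F\p$ concordant to $F$ such that $K\p = \bd F\p$ is hyperbolic and $S^3 - K\p$ has Haken number at least $N$. Since $S^3 - K\p$ is a hyperbolic knot exterior it has a single cusp, hence finite volume, so the choice of $N$ forces $\mathrm{vol}(S^3 - K\p) > V$. Thus $F\p$ is the required surface.

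So within this section the lemma is merely Lemma 5.1 combined with a general fact about hyperbolic \tm s, and that general fact is where the real work is; it is the main obstacle and is isolated in the Appendix. I expect its proof to proceed along familiar lines: by Thurston--J{\o}rgensen only finitely many cusped finite volume hyperbolic \tm s, each of volume at most $V$, are needed to produce by Dehn filling all finite volume hyperbolic \tm s of volume less than $V$; Kneser--Haken finiteness bounds the Haken number of each of those finitely many manifolds; and a maximal system of disjoint, pairwise non-\pl, essential surfaces in a Dehn filling can be isotoped off the cores of the filling solid tori (or surgered along meridian disks of those tori) so as to be controlled by the Haken number of the unfilled manifold. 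Combining these estimates bounds the Haken number in terms of $V$ alone.
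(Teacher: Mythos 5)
Your proof is correct and takes essentially the same approach as the paper: the paper's entire proof of this lemma is the one-line remark that it ``follows from Proposition A.1 in the Appendix and Lemma 5.1,'' and you have simply spelled out the elementary unravelling of that implication (including correctly flagging the typo $N$ for $V$ in the statement). Your closing sketch of how the Appendix theorem should go also matches the paper's actual Appendix argument, namely J{\o}rgensen finiteness plus a lemma bounding the Haken number of a Dehn filling by that of the unfilled manifold.
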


\begin{proof} This follows from Corollary A.1 in the Appendix and Lemma 5.1.\end{proof}

\section{Making a map}

\begin{lem} Given a standard presentation of a non-disk Seifert surface $F$, 
there is a concordance of $F$ with a Seifert surface $F\p$ 
satisfying $\mathrm{(a)}$, $\mathrm{(b)}$, $\mathrm{(c)}$, and $\mathrm{(d)}$. \end{lem}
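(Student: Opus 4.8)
The plan is to combine Lemma 5.1 with the map construction and then invoke Proposition A.1 to upgrade the Haken number to volume. First I would start with the concordance furnished by Lemma 5.1, so that $F$ is concordant to a Seifert surface $F\p$ with $K\p$ hyperbolic and $S^3-K\p$ having Haken number at least $N$; this already gives (a) and (b), and (c) then follows from Proposition A.1 applied to the one-manifold $S^3-K\p$, since large Haken number forces large volume. So the real content of this lemma is to check that the \emph{same} concordance can be arranged to satisfy (d), i.e.\ that there is a map of pairs $f:(S^3,K\p)\rightarrow(S^3,K)$ inducing an epimorphism on knot groups.

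Next I would build the map $f$. The point is that in the construction of Lemma 4.1 (and its iterate in Lemma 5.1) the new wedge $W\p$ is obtained from $W$ by replacing the trivial tangles in the balls $E^+\subseteq U$ and $E^-\subseteq L$ with the hyperbolic ribbon tangles $\al$ and $\ga$. Each ribbon tangle $\de(\tau)$ is, by Lemma 4.2, concordant to the trivial tangle $\ep$; more to the point, it is a \emph{ribbon} concordance, built by attaching $1$-handles, doing Type II Reidemeister moves, and attaching $2$-handles. Collapsing the $1$-handles back defines a degree-one-like map from the exterior of $\de(\tau)$ onto the exterior of the trivial tangle $\ep$ which is the identity on the boundary sphere-with-holes and is surjective on $\pi_1$. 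Doing this inside each of $E^+$ and $E^-$ and taking the identity on the rest of $S^3$ (in particular on the braid box $\be$ in $C$, whose exterior is a product), one assembles a map of pairs $f:(S^3,K\p)\rightarrow(S^3,K)$. Since $\pi_1(S^3-K)$ is generated by meridians, and each meridian of $K$ is carried by a meridian of $K\p$ under $f$ (the tangle replacement does not destroy any strand), $f_*$ is onto.

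I would then verify that the epimorphism statement survives the stacking in Lemma 5.1: replacing $\al$ and $\ga$ by $N$ stacked copies just composes $N$ of the tangle-exterior collapses, and the composite is still a surjection on $\pi_1$ and still the identity outside $E^+\cup E^-$. Finally I would assemble: the single concordance from Lemma 5.1 (with $N$ chosen large enough to force volume $\geq V$ via Proposition A.1) gives $F\p$ satisfying (a), (b), (c), and, by the map just constructed, (d) as well.

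The main obstacle I expect is (d), specifically making the surjectivity of $f_*$ precise and checking that the map is genuinely a well-defined map \emph{of pairs} that restricts correctly on the boundary tori. One has to be careful that collapsing the $1$-handles of the ribbon concordance yields a continuous map on the tangle exteriors which matches up along $\partial E^{\pm}$ with the identity on the complement, and that no strand of the original tangle is killed — i.e.\ that the ribbon moves only add relations, never remove generators — so that all the meridional generators of $\pi_1(S^3-K)$ are hit. The Haken-number-implies-volume step (c) is quarantined into Proposition A.1 in the Appendix, so here it is only a citation.
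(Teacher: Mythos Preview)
Your outline for (a), (b), (c) is fine, but the heart of the lemma is indeed (d), and here your approach diverges from the paper's and has a real gap. You propose to extract the map $f$ directly from the ribbon concordance of Lemma~4.2 by ``collapsing the $1$-handles back''. This is not made precise, and it is not clear that it can be: a ribbon concordance from $\varepsilon$ to $\delta(\tau)$ gives (via Gordon's argument) an epimorphism $\pi_1(B^3-\delta(\tau))\twoheadrightarrow\pi_1(B^3-\varepsilon)$ on the level of groups, but it does \emph{not} in any obvious way hand you a continuous map of pairs $(B^3,\delta(\tau))\to(B^3,\varepsilon)$ that is the identity on $\partial B^3$. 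Crushing the bands of the ribbon surface is an operation on the concordance surface, not on the ambient ball, and you give no mechanism for extending it to the tangle exterior while fixing the boundary sphere. You yourself flag this as the main obstacle, and your sketch does not overcome it.

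The paper's route is genuinely different. Rather than reuse the tangles $\alpha,\gamma$ from Sections~4--5, it replaces them by \emph{hyperbolic boundary tangles}: tangles each of whose strands bounds a disjoint compact orientable surface in $B^3$ (Lemma~6.3, following Cochran--Orr, with an explicit construction in Figures~9--12). For a boundary tangle, Lemma~6.2 constructs the desired map of pairs by crushing each Seifert surface $G_i$ to a disk fiberwise and crushing the complement of a neighborhood of $G$ to a point; this is a concrete continuous map on $B^3$, identity on $\partial B^3$, carrying meridians to meridians. The doubled boundary tangle is still a boundary tangle and still ribbon-concordant to the trivial one, so (a)--(c) go through as before with these new tangles, and now (d) follows by taking $f$ to be the identity outside $E^\pm$. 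The extra ingredient you are missing is precisely this: the tangles must be chosen to be \emph{boundary} tangles, and the map comes from their Seifert surfaces, not from the ribbon structure of the concordance.
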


\begin{proof} The proof follows from Lemmas 6.2 and 6.3 below by defining 
$f$ to be the identity outside the tangles involved.\end{proof}

A tangle $(B^3,\tau_1\cup\ldots\cup\tau_n)$ is a \textit{boundary tangle} 
if there are disjoint arcs $\sigma_1,\ldots,\sigma_n$ in $\partial B^3$ 
such that $\partial\sigma_i=\partial\tau_i$ and disjoint, compact, orientable surfaces 
$G_i$ in $B^3$ such that $\partial G_i=\tau_i\cup\sigma_i$. 
Let $\tau=\tau_1\cup\ldots\cup\tau_n$ and $G=G_1\cup\ldots\cup G_n$.\\
 
Let $\tau^*=\tau^*_1\cup\ldots\tau^*_n$ be a trivial tangle in 
$B^3$ with $\partial \tau^*_i=\partial \tau_i$ for all $i$. 
Choose disjoint disks  
$G_i^*$ in $B^3$ with $\partial G_i^*=\partial G_i$.\\

\begin{lem} There is a map 
$g:(B^3,\tau,B^3-\tau)\rightarrow(B^3,\tau^*,B^3-\tau^*)$ 
which is the identity on $\partial B^3$ and a homeomorphism from 
$\tau$ to $\tau^*$. In particular $g$ induces an epimorphism 
$\pi_1(B^3-\tau)\rightarrow\pi_1(B^3-\tau^*)$ which carries the meridians  
of  $\tau$ to the meridians of $\tau^*$.\end{lem}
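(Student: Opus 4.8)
The plan is to build $g$ handle-by-handle using the boundary-surface structure. Since $(B^3,\tau)$ is a boundary tangle, each $\tau_i$ together with $\sigma_i\sbs\partial B^3$ bounds a compact orientable surface $G_i$, and the $G_i$ are pairwise disjoint. First I would choose, for each $i$, a system of bands (a ``spine'' description) realizing $G_i$: write $G_i$ as a regular neighborhood in $B^3$ of the arc $\tau_i$ together with finitely many embedded arcs and bands attached to it, so that $\partial G_i=\tau_i\cup\sigma_i$. The key geometric move is that $\tau_i$ can be carried onto the trivial arc $\tau_i^*$ by an ambient isotopy of $B^3$ \emph{rel} $\partial B^3$ \emph{after} one is permitted to let $\tau_i$ pass through itself along $G_i$; more precisely, pushing $\tau_i$ across $G_i$ toward $\sigma_i$ (a ``finger move'' sweeping out $G_i$) drags $\tau_i$ to an arc parallel into $\partial B^3$, hence isotopic rel endpoints to $\tau_i^*$, while only crossing the other arcs $\tau_j$ transversally (the $G_i$ being disjoint, the sweep of $G_i$ meets $\tau_j$ in finitely many points). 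This describes a map $B^3\to B^3$, identity on $\partial B^3$, restricting to a homeomorphism $\tau\to\tau^*$, obtained as a composition of one ambient isotopy and finitely many ``crossing changes'' along the disjoint surfaces; each crossing change is supported in a ball meeting $\tau$ in two trivial strands and is realized by a degree-one map of that ball rel its boundary.

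The second, cleaner way to organize this — and the one I would actually write — is to realize $g$ as a composition $B^3\xrightarrow{\ \text{collapse}\ } B^3/{\sim}\xrightarrow{\ \cong\ } B^3$, where the first map crushes a regular neighborhood $N(G)$ of $G=\bigcup G_i$ in an appropriate way. Concretely: take disjoint 2-handles $H_i$ attached to $\partial B^3$ along $\sigma_i$ and co-core running parallel to $G_i$, so that after attaching all $H_i$ to $B^3$ and then pushing the attaching regions, $\tau$ becomes isotopic to a tangle lying in $\partial B^3$; dually, capping off with the $G_i^*$ (disks with $\partial G_i^*=\partial G_i$) exhibits the target $(B^3,\tau^*)$ with the same boundary data. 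Matching these two descriptions of the handle structure gives the map $g$ as ``attach handles along $G_i$, then quotient each $G_i$ to $G_i^*$.'' The map is the identity on $\partial B^3$ by construction, and it carries $\tau_i$ homeomorphically to $\tau_i^*$ because it is a homeomorphism away from $N(G)$ and $N(G)\cap\tau=\tau$ is carried to $N(G^*)\cap\tau^*=\tau^*$ by a homeomorphism of pairs. Finally, for the statement about $\pi_1$: the map on complements $B^3-\tau\to B^3-\tau^*$ is built from elementary compressions along the $G_i$ (each of which, on $\pi_1$, kills the class carried by the co-core of an attached 2-handle, i.e.\ adds a relation), so it is $\pi_1$-surjective; and since $g$ is the identity on $\partial B^3$ and a homeomorphism near $\tau$, the meridian of $\tau_i$ — a small loop in $\partial N(\tau_i)$ — is sent to the meridian of $\tau_i^*$.

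The main obstacle is making the ``sweep $\tau_i$ across $G_i$'' step honest: one must check that the $G_i$ can be chosen so that the sweep-out of each $G_i$ meets the other arcs $\tau_j$ and the other surfaces $G_j$ in a controlled (generic, transverse) way, so that the crossing changes along the $G_i$ can be performed simultaneously and the resulting self-map is well defined. The disjointness of the $G_i$ does the bulk of this work — the sweep of $G_i$ can be taken inside a regular neighborhood $N(G_i)$ disjoint from $N(G_j)$ for $j\neq i$ — so the only real content is a general-position argument putting $\tau_j\cap N(G_i)$ into finitely many trivial strands and checking that the composite map restricts to a homeomorphism on $\tau$. That verification is routine once the handle picture is set up, so the proof should be short.
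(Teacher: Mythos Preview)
Your second paragraph is aiming at the paper's approach --- a collapse of $B^3$ guided by the surfaces $G_i$ --- but the details you give do not quite parse: $2$-handles are attached along circles, not along the arcs $\sigma_i$; ``co-core running parallel to $G_i$'' is undefined when $G_i$ has genus; and ``attach handles along $G_i$, then quotient each $G_i$ to $G_i^*$'' does not specify a map. The paper's execution of the same idea is short and explicit, and avoids all of this. Work in the exterior $Y$ of $\tau$; set $H_i=G_i\cap Y$, take a product neighborhood $N_i=H_i\times[-1,1]$ in $Y$, let $N=\bigcup N_i$, and fix a collar $C$ on $\partial Y$ meeting each $N_i$ in $A_i\times[-1,1]$ for a collar $A_i$ of $\partial H_i$ in $H_i$. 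Now define $g$ in two pieces: on $N_i$, for each $t\in[-1,1]$ crush the slice $(H_i\smallsetminus C)\times\{t\}$ to a point, which converts $N_i$ into a product neighborhood $N_i^*$ of a disk $H_i^*$; on the closure $W$ of $Y\smallsetminus N$, crush $W\smallsetminus C$ to a point, which yields a $3$-ball identified with the closure of $B^3\smallsetminus N^*$. These two quotient maps agree on the overlap and extend across the removed tubular neighborhood of $\tau$ to give $g$. No general-position argument, handle slides, or crossing changes are needed.

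Your first paragraph sketches a genuinely different route --- unknot each $\tau_i$ by crossing changes guided by $G_i$, realizing each crossing change by a degree-one map of a small ball --- and this can be made to work. But your description conflates two things: ``pushing $\tau_i$ across $G_i$ toward $\sigma_i$'' is an isotopy only when $G_i$ is a disk; when $G_i$ has positive genus the honest statement is that $\tau_i$ becomes trivial after finitely many self-crossing changes located at the $1$-handles of $G_i$. The disjointness of the $G_i$ does localize these moves to disjoint balls, so the composite map exists, but you would still owe the reader an explicit model of the degree-one map on each ball that carries the two strands homeomorphically while sending complement to complement. That is not hard, but it is more work than the paper's two-line crush.
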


\begin{proof} Let $Y$ be the exterior of $\tau$ in $B^3$. Let $H_i=G_i\cap Y$. 
Let $N_i=H_i\times[-1,1]$ be a regular neighborhood of $H_i$ in $Y$. 
Let $N$ be the union of the $N_i$. 
Let $C$ be a collar on $\partial Y$ whose intersection with each $N_i$ 
has the form $A_i\times [-1,1]$, where $A_i$ is a collar on $\partial H_i$ 
in $H_i$. \\

In a similar fashion let $Y^*$ be the exterior of $\tau^*$ in $B^3$, let 
$H^*_i=G_i^*\cap Y^*$, $N_i^*=H_i^*\times [-1,1]$, $N^*=\cup N_i^*$, $C^*=C$, and $A_i^*=A_i$.\\

For each $t\in[-1,1]$ define a map from $N_i$ to $N_i^*$ by crushing 
$(H_i-C)\times\{t\}$ to a point in $H^*_i$. This defines the restriction 
of $g$ to $H_i\times[-1,1]$. \\

One next defines $g$ on the closure $W$ of the complement of $N$ in $B^3$  
by crushing $W-C$ to a point. This gives a quotient map onto a $3$-ball 
which may be identified with the closure of $B^3-N^*$. \\

Putting the two quotient maps together gives a quotient map from 
$Y$ to $Y^*$ which extends to a map $g:B^3\rightarrow B^3$ with the 
required properties.\end{proof}

The existence of hyperbolic boundary $n$-tangles was proven by 
Cochran and Orr \cite[Lemma 7.3 on pp. 519-520]{Cochran-Orr} in a more abstract general setting. In keeping with 
the desire to make the constructions in this paper as explicit 
as possible their procedure is implemented in the following specific 
construction.\\

\begin{lem}[Cochran-Orr]  Hyperbolic boundary $n$-tangles exist.\end{lem}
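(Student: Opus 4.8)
The plan is to build an explicit hyperbolic boundary $n$-tangle by starting from a model boundary tangle whose complement is easy to understand, and then modifying it inside small balls to kill all essential surfaces, using the Gluing Lemma to control what happens. First I would fix a concrete boundary $n$-tangle $(B^3,\tau)$: take $\sigma_1,\dots,\sigma_n$ to be disjoint arcs in $\partial B^3$, and let each $G_i$ be a small disk spanning $\tau_i\cup\sigma_i$, so that $\tau$ is in fact the trivial tangle and $G_i$ are disjoint disks. This trivially satisfies the definition of a boundary tangle but has a non-hyperbolic (compressible, product-like) complement. The work is to change the embedding of each arc, rel endpoints, inside a ball meeting $\partial B^3$ only in a neighborhood of $\partial\sigma_i$, so that the arc still bounds an orientable surface together with $\sigma_i$ but the exterior becomes excellent.

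The key step is to invoke the hyperbolic $n$-tangle of \cite{Myers 83} (the one pictured in Figure 5 of this paper), whose exterior is excellent, and graft a boundary structure onto it. Concretely, I would arrange the arcs of the hyperbolic tangle so that the $i$-th arc $\tau_i$ bounds, together with an arc $\sigma_i$ in $\partial B^3$, an orientable surface $G_i$ in $B^3$ — one can do this by choosing $G_i$ to be (a pushed-in copy of) a Seifert surface for the arc $\tau_i\cup\sigma_i$ relative to the rest of the tangle, taking care (as Cochran–Orr do) that the $G_i$ can be made disjoint. The disjointness is arranged by a general-position argument together with the fact that one has freedom in choosing the arcs $\sigma_i$ and in isotoping the $G_i$ off one another; here the orientability is essential and is exactly why one uses the framed/ribbon-type construction rather than an arbitrary spanning surface. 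Following the recipe of \cite[Lemma 7.3]{CO}, one can realize any preassigned boundary data this way.

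It then remains to check that the exterior $Y$ of $\tau$ in $B^3$ is excellent. This is where the Gluing Lemma does the work: $Y$ is obtained from the excellent exterior of the hyperbolic $n$-tangle of \cite{Myers 83} (possibly after further stacking or composing with additional hyperbolic pieces to absorb the bookkeeping of the $\sigma_i$ and $G_i$) by gluing along incompressible surfaces of negative Euler characteristic, so that irreducibility, boundary-irreducibility, anannularity, and atoroidality are all inherited. The main obstacle I anticipate is precisely the simultaneous achievement of the three constraints — that the arcs can be chosen with prescribed boundary in $\partial B^3$, that the spanning surfaces $G_i$ are disjoint and orientable, and that the resulting exterior remains excellent — since improving any one of these (say, making the $G_i$ disjoint) tends to interfere with the others. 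The way around this is to do the construction one arc at a time inside disjoint balls, each time appealing to the single-tangle hyperbolicity result and the Gluing Lemma, so that disjointness of the $G_i$ is automatic and excellence is assembled piece by piece rather than verified globally at the end.
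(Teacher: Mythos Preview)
Your proposal has a genuine gap at exactly the point you flag as ``the main obstacle.'' The difficulty is not just bookkeeping: an arbitrary hyperbolic $n$-tangle need not be a boundary tangle at all, so ``choosing $G_i$ to be a pushed-in Seifert surface for $\tau_i\cup\sigma_i$'' does not in general produce \emph{disjoint} surfaces, and a general-position argument cannot make them disjoint without changing their isotopy class. Your proposed fix---doing the construction one arc at a time inside disjoint balls---breaks hyperbolicity rather than establishing it: if each arc is modified inside its own sub-ball, the complementary piece (the trivial-tangle exterior with those balls removed) is far from excellent, so the Gluing Lemma does not apply; and a single-arc ``tangle'' in a ball cannot itself be excellent in the required sense (the paper even stipulates $n\geq 2$ throughout).

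The paper's proof supplies the missing idea: rather than trying to find spanning surfaces for a given hyperbolic tangle, it \emph{builds the surfaces into the tangle from the start} by taking the hyperbolic $n$-tangle $\lambda$ and replacing each strand by two parallel copies to form a $2n$-tangle $\Delta(\lambda)$. The parallel pairs automatically cobound disjoint bands, and after a sequence of endpoint slides (which do not change the exterior's homeomorphism type) and half-twist insertions, these bands close up into disjoint once-punctured tori $G_i$. The point is that the exterior of the resulting tangle is homeomorphic to the exterior of $\lambda$ with pairs of incompressible once-punctured tori in its boundary identified, so the Gluing Lemma applies directly to a single excellent piece. This doubling trick is what simultaneously delivers the disjoint orientable $G_i$ and the excellence of the exterior; your sketch never arrives at it.
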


\begin{proof} First choose a hyperbolic $2n$-tangle $\lambda$. Configure it 
so that the ambient 3-cell is a rectangular box with each component of 
the tangle joining the interior of the top of the box to the interior of 
the bottom of the box. Place it in the interior of a larger 
box with which it is concentric. Connect the endpoints of $\lambda$ to the 
boundary of the larger box by straight arcs  
as in the first diagram in Figure 9. Then slide the endpoints of every second 
arc onto the arc preceding it as in the second diagram. 
Then slide the bottom endpoints of each resulting graph across the 
front of the larger box onto the top 
arc as in the third diagram. This does not change the homeomorphism type 
of the exterior of the graph. \\

\begin{figure}[h!]
\scalebox{0.125}{\includegraphics{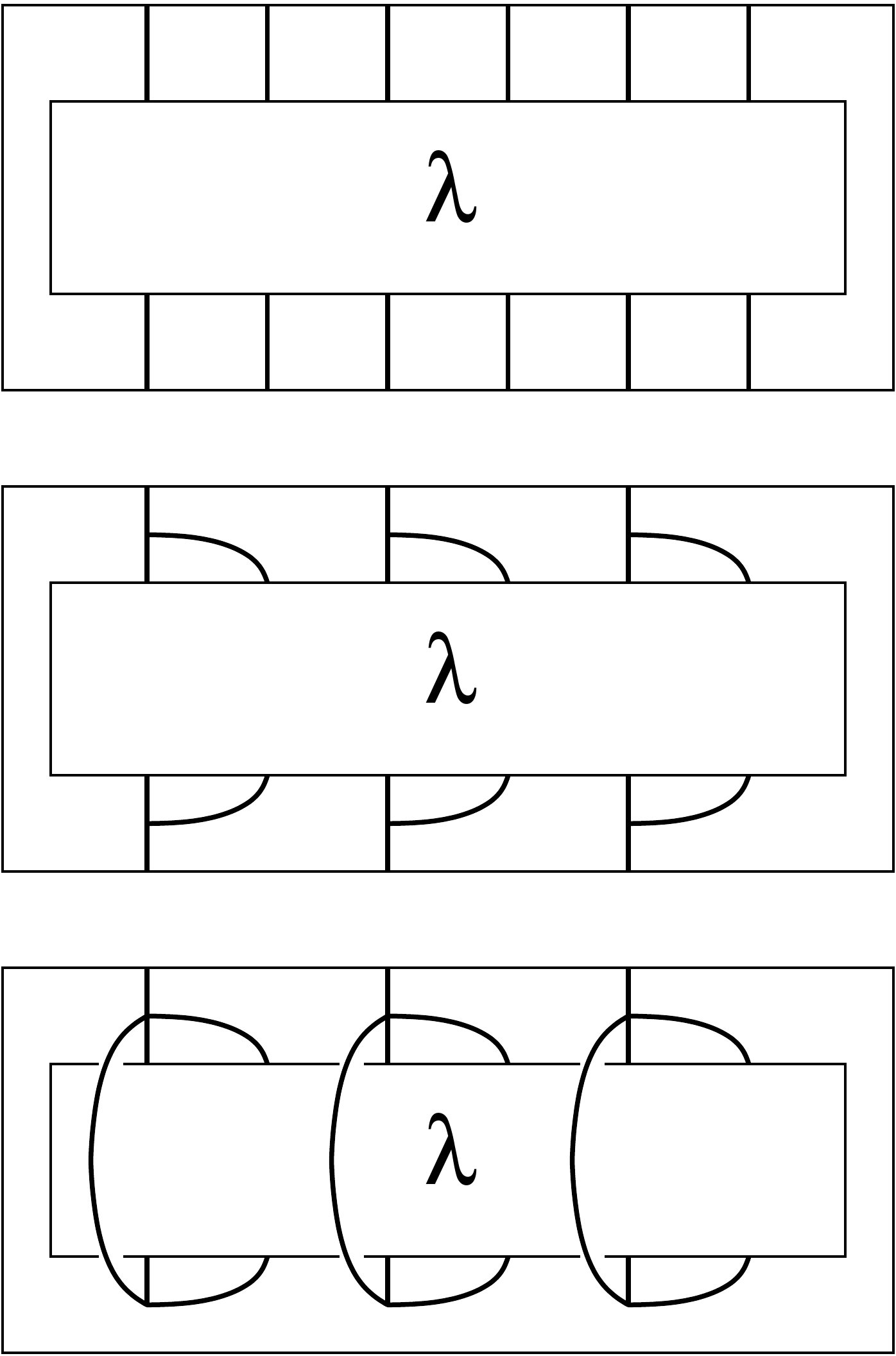}}
\caption{Sliding endpoints to obtain a graph}
\end{figure}

Then one constructs the $2n$-tangle $\Delta(\lambda)$ by 
replacing each arc of $\lambda$ by 
two parallel copies as in Figure 10.\\

\begin{figure}[h]
\scalebox{0.25}{\includegraphics{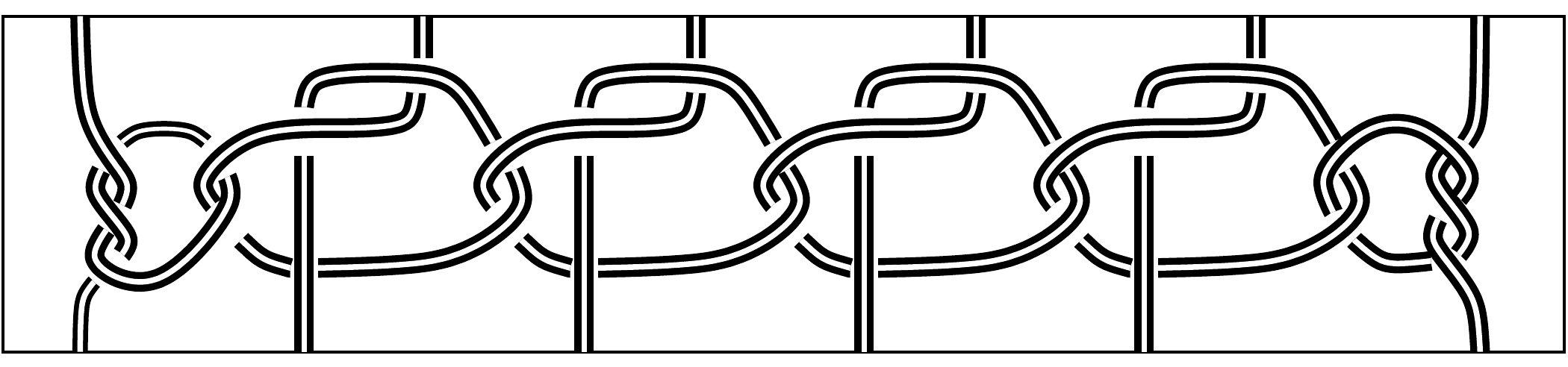}}
\caption{Double $\Delta(\lambda_n)$ of $\lambda_n$, $n=6$}
\end{figure}

Next one modifies the last diagram of Figure 9 in the following 
ways to obtain Figure 11. First, one replaces $\lambda$ in the inner box by 
$\Delta(\lambda)$. 
Second, one widens the graphs which join the inner box to the boundary of 
the outer box to obtain surfaces whose unions with the bands inside the 
inner box are punctured tori.    
Note the half-twists inserted into the lower portion of Figure 11 
to achieve this. By the gluing lemma the exterior of this new tangle 
is hyperbolic since it is obtained from a $3$-manifold homeomorphic to 
the exterior of $\lambda$ by identifying pairs of incompressible 
once-punctured tori in its boundary.\\

\begin{figure}[h]
\scalebox{0.25}{\includegraphics{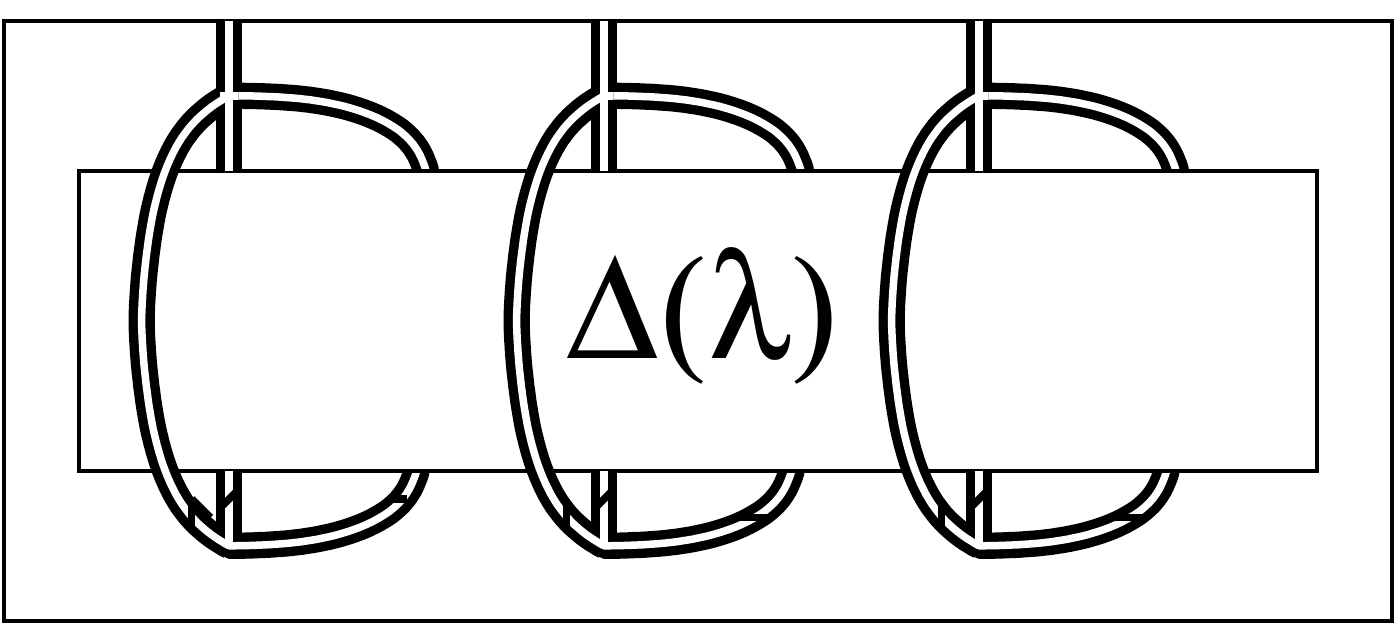}}
\caption{A hyperbolic boundary tangle}
\end{figure}

Finally one slides the left endpoints of each arc across the front of 
the larger box (dotted lines) to obtain the final hyperbolic boundary 
tangle as in 
Figure 12. The union of this tangle with its mirror image is still 
a boundary tangle and the rest of the proof proceeds as before.
\end{proof}

\begin{figure}[h]
\scalebox{0.25}{\includegraphics{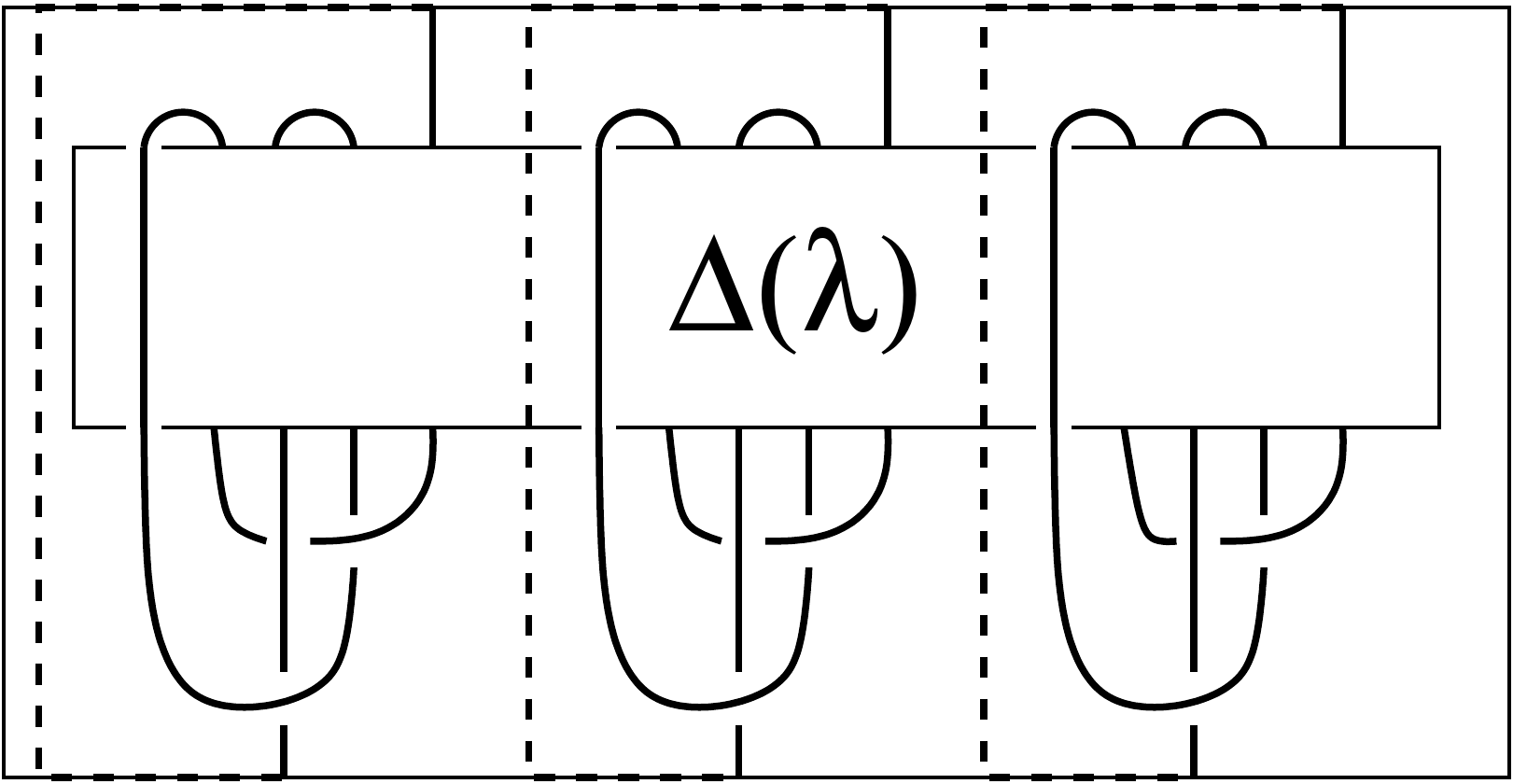}}
\caption{The hyperbolic boundary tangle ready for use}
\end{figure}

\section{Non-invariance of knot Floer homology}

This section gives an example of a knot $J$ with Seifert surface $F$ and a knot $J\p$ with 
Seifert surface $F\p$ such that $F$ and $F\p$ are concordant, but $J$ and $J\p$ have different knot Floer homology.\\

$J$ is the trefoil knot and $F$ a genus one Seifert surface. $J\p$ is a certain twisted 
double of a copy $K$ of the stevedore's knot $6_1$ and $F\p$ is a genus one surface contained in a solid torus $V$ 
whose core is $K$.\\

Figure 13 shows two projections of $K$. The second will be used since it 
more clearly displays the fact that $K$ is a ribbon knot. See \cite{Mizuma} for an isotopy between them.\\

\begin{figure}[h]
\raisebox{0.2in}{\scalebox{0.25}{\includegraphics{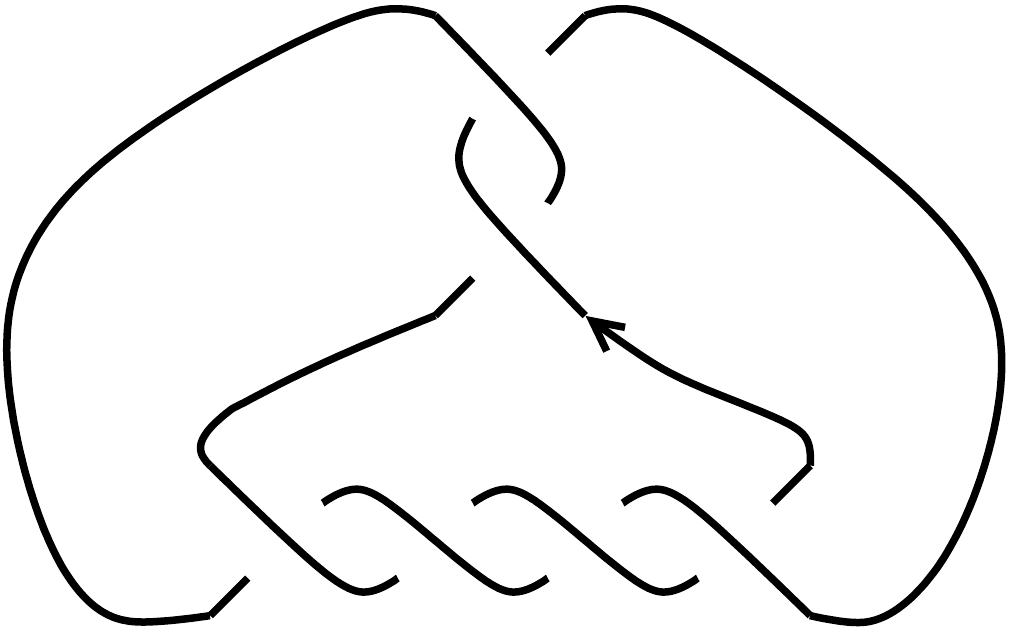}}}
\hspace{0.5in}
\scalebox{0.20}{\includegraphics{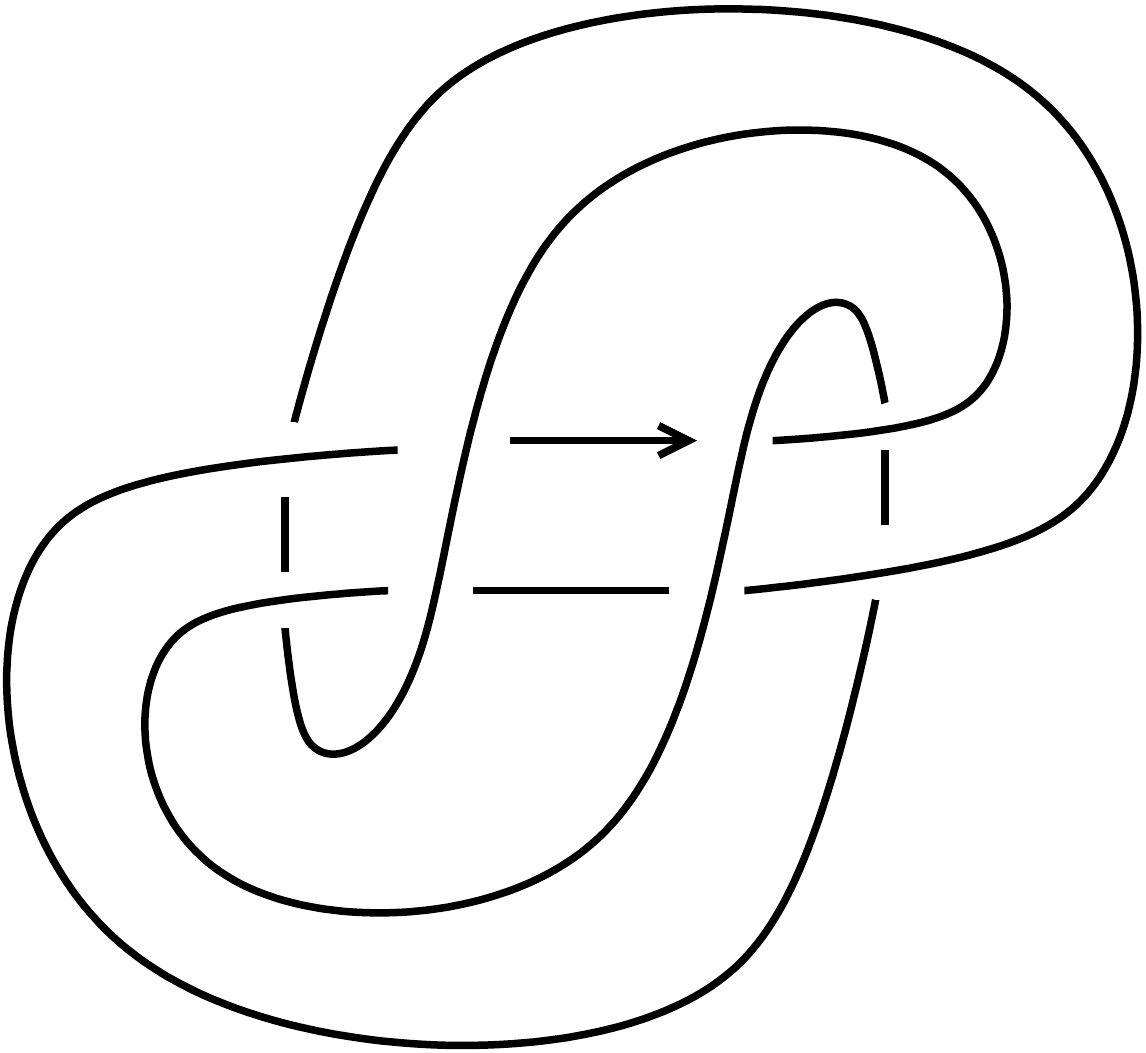}}
\caption{Two projections of the stevedore's knot $6_1$.}
\end{figure}
 
Figure 14 shows an annulus $A$ embedded in 
$S^3$ with one boundary component being $K$. The orientations 
of $K$ and the other boundary component $K^*$ are chosen so that 
the two curves are homologous in $A$. From the diagram one computes that 
the linking number $\mathrm{lk}(K,K^*)=0$. Let $\widetilde{K}=K\cup K^*$.\\

\begin{figure}[h]
\scalebox{0.2}{\includegraphics{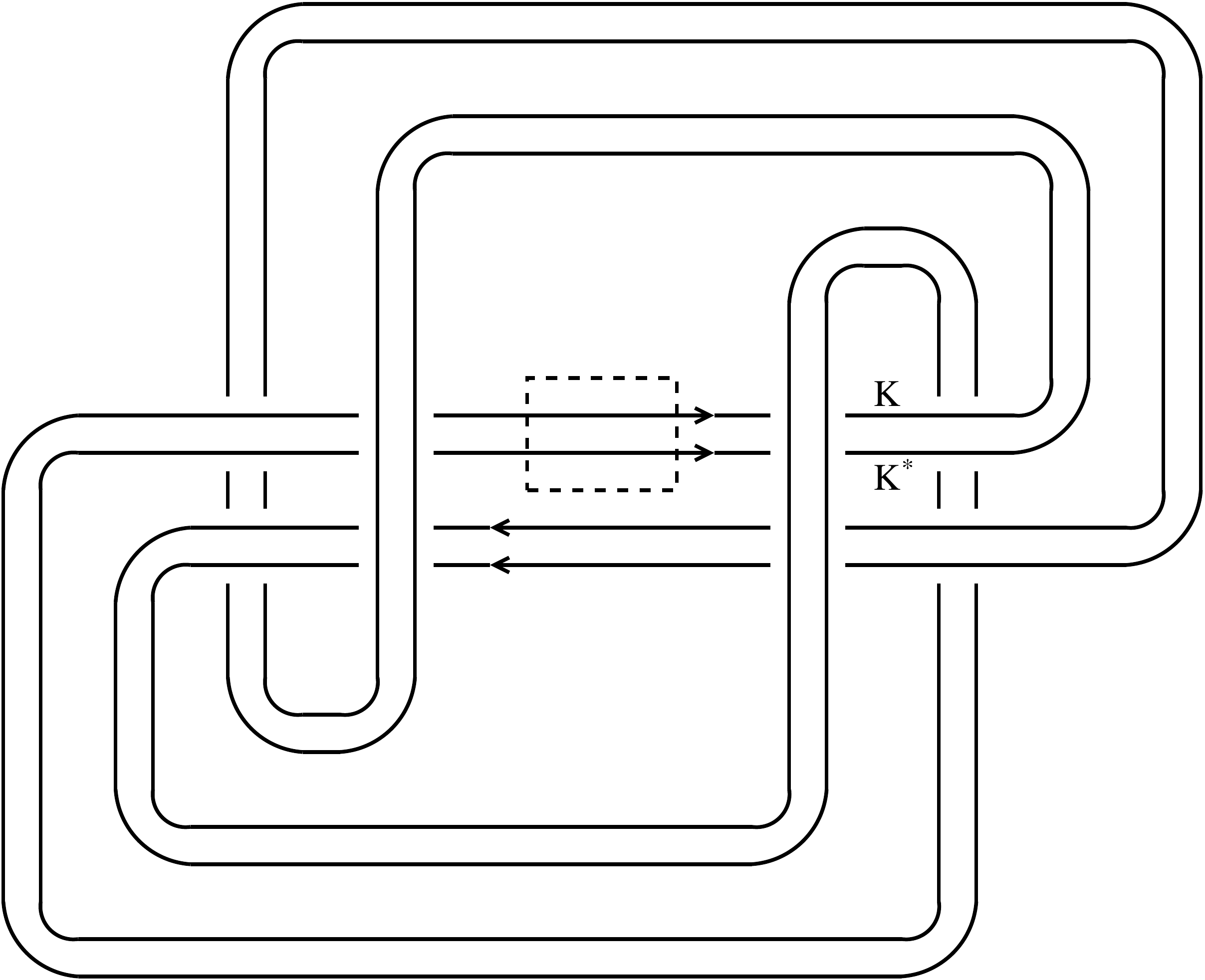}}
\caption{A 2-strand cable link $\widetilde{K}=K\cup K^*$ of $K=6_1$.}
\end{figure}

The knot $J\p$ is obtained by reversing the orientation on  
$K^*$ to get a new oriented link $\widehat{K}=K\cup(-K^*)$ and then  
replacing the trivial tangle in the box in 
Figure 14 by the tangle in Figure 15. \\

\begin{figure}[h!]
\scalebox{0.25}{\includegraphics{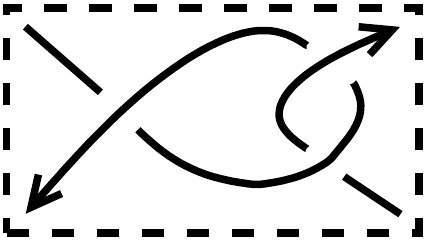}}
\caption{}
\end{figure}

This results in the annulus being replaced by a genus one surface $F\p$. 
A saddle move on the new diagram followed by a pair of 2-handle additions shows that $F$ and $F\p$ are concordant.\\

By a result of Ni \cite{Ni} knot Floer homology detects fibered knots. $J$ is fibered. If $J\p$ were fibered then its 
companion $K$ would also be fibered (Proposition 9.11 of \cite{Myers 80}), but the Alexander polynomial $2t^{-1}-5+2t$ of $K$ 
is not monic, and so $K$ is not 
fibered (see e.g. \cite[Corollary 10.8]{Rolfsen}), thus $J$ is not fibered, and so the knot Floer homologies of $J$ and $J\p$ 
must be different.\\

\appendix

\section{Pumping up the volume} 

There are several results in the literature to the effect that a 
topologically complicated hyperbolic 3-manifold has high volume. 
See for example \cite{Lackenby, Purcell, Shalen}. \\ 

One measure of the complexity of a compact 3-manifold $M$ is the 
\textit{Haken number} $h(M)$ \cite{Haken, Jaco}, the maximum number of 
compact, connected, 
properly embedded, \inc, boundary incompressible, pairwise non-parallel surfaces 
in $M$. Call the union of such a maximal collection of surfaces a 
\textit{Haken system} for $M$.\\

\begin{thm} Let $M_n$ be a sequence of compact, connected, orientable 
3-manifolds with complete, finite volume, hyperbolic interiors $N_n$. \\ 
\begin{center}
If ${\ds\lim_{n\rightarrow\infty}h(M_n)=\infty}$, then 
${\ds\lim_{n\rightarrow\infty}V\!ol(N_n)=\infty}$. 
\end{center}
\end{thm}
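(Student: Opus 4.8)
The plan is to prove the contrapositive: assuming $\mathrm{Vol}(N_n)\le V_0$ for all $n$, I want to bound $h(M_n)$. The key input is the J{\o}rgensen--Thurston theory of hyperbolic volumes: for a fixed bound $V_0$ there are finitely many complete, cusped, finite volume hyperbolic 3-manifolds, with compact cores $\bar X_1,\dots,\bar X_r$ (so each $\partial\bar X_j$ is a union of tori), such that every finite volume hyperbolic 3-manifold of volume at most $V_0$ is homeomorphic to the interior of a Dehn filling of some $\bar X_j$, the filling being allowed to be trivial on some or all of the cusps. So each $M_n$ is homeomorphic to a Dehn filling of some $\bar X_{j(n)}$, and it suffices to show that all Dehn fillings of a single fixed compact $\bar X$ (with torus boundary and hyperbolic interior) have Haken number at most $C(\bar X)$, where $C(\bar X)$ is the bound supplied by Kneser--Haken finiteness for a fixed triangulation of $\bar X$.

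So fix a filling $M=\bar X\cup W_1\cup\dots\cup W_s$, the $W_\alpha$ being the filling solid tori, and let $S_1,\dots,S_k$ be a Haken system for $M$: disjoint, connected, incompressible, $\partial$-incompressible, pairwise non-parallel and non-boundary-parallel surfaces, with $k=h(M)$. I would isotope the system to minimize its intersection with a meridian disk of each $W_\alpha$; the usual innermost-disk and outermost-arc arguments then force each $S_i\cap W_\alpha$ to be a union of meridian disks of $W_\alpha$. As these are sub-disks of $S_i$ with boundary in the interior of $S_i$, the surface $T_i:=S_i\cap\bar X$ obtained by deleting their interiors is connected and nonempty (a solid torus contains no surface incompressible in $M$); it is incompressible and $\partial$-incompressible in $\bar X$; it is not boundary-parallel in $\bar X$; and it has $\chi(T_i)<0$, since $S_i$ --- being essential in the hyperbolic manifold $M$ --- is neither a sphere, disk, torus, nor Klein bottle, and deleting disks only lowers $\chi$. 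I would also record one property of product regions: by Waldhausen's classification of surfaces in an $I$-bundle together with the anannularity of $\bar X$, any surface incompressible and $\partial$-incompressible in a product region $F\times[0,1]\subseteq\bar X$ and disjoint from $F\times\{0,1\}$ is horizontal, hence parallel to $F$; in particular nothing can pass partway through such a region.

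Now suppose $k>C(\bar X)$. Then $T_1,\dots,T_k$ are more than $C(\bar X)$ disjoint essential surfaces in $\bar X$, so by Kneser--Haken finiteness two of them, say $T_a$ and $T_b$ with $a\ne b$, are parallel; replacing the pair by an innermost one (legitimate by the product-region property), I may assume the region $F\times[0,1]\subseteq\bar X$ between them, with $F\times\{0\}=T_a$ and $F\times\{1\}=T_b$, meets no $S_i$ in its interior. Its boundary circles $\partial F\times\{0,1\}$ lie on the filling tori and bound meridian disks of $S_a$ and $S_b$, so on each filling torus met by $F$ the slope of $\partial F$ is exactly the filling slope; hence the annuli $\partial F\times[0,1]$ are glued in $M$ to the $D^2\times[0,1]$ slabs of the $W_\alpha$ cut off by those meridian disks, and these slabs too are disjoint from every $S_i$ in their interiors --- any other meridian disk in such a slab would have boundary in the annulus $\partial F\times(0,1)$, which the product-region property forbids. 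Capping $F\times[0,1]$ off by these slabs yields a product region $\widehat F\times[0,1]$ in $M$ with $\widehat F\times\{0\}=S_a$ and $\widehat F\times\{1\}=S_b$, so $S_a$ and $S_b$ are parallel in $M$ --- contradicting that $S_1,\dots,S_k$ is a Haken system. Hence $k\le C(\bar X)$, so $h(M_n)\le\max_j C(\bar X_j)$ for all $n$, contradicting $h(M_n)\to\infty$.

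Of the ingredients, the two finiteness theorems --- J{\o}rgensen--Thurston for bounded volume and Kneser--Haken for a fixed triangulation --- are standard black boxes, and I expect the real work to be the middle step: checking that the Haken system can be placed so that the cut-open pieces $T_i$ are well behaved, and, above all, that a parallelism between two of them in $\bar X$ genuinely lifts to a parallelism in $M$ with no interference from the remaining surfaces inside the filling solid tori.
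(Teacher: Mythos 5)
Your argument follows essentially the same two-step outline as the paper: invoke J{\o}rgensen--Thurston finiteness to reduce to a finite list of compact cores $\bar X_j$, then show that a Haken system in $M$ is cut by the filling solid tori into a disjoint collection of incompressible, $\partial$-incompressible, pairwise non-parallel surfaces in $\bar X_j$, so its size is bounded by a constant depending only on $\bar X_j$. The paper packages the second step as a clean stand-alone statement, Lemma~A.2, that $h(Q)\le h(Q^*)$ whenever $Q$ is a Dehn filling of $Q^*$, for arbitrary compact, orientable, irreducible, $\partial$-irreducible $Q,Q^*$; its proof is the same meridian-disk-minimality argument you use (isotope, check incompressibility and $\partial$-incompressibility of $S\cap Q^*$, cap parallelisms by slabs in the solid tori). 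Two small differences worth noting: (i) the paper's lemma needs no hyperbolicity or anannularity of $\bar X$, while you bring in anannularity and Waldhausen's $I$-bundle classification to argue for an ``innermost'' parallel pair and slab-disjointness --- this extra step turns out to be unnecessary, since capping a parallelism $T_a\cong T_b$ in $\bar X$ by the solid-torus slabs produces an embedded product region exhibiting $S_a\parallel S_b$ in $M$ regardless of whether other pieces of $S$ sit inside it (and there can be no extra meridian disks of $S_a$ or $S_b$ themselves in those slabs, since the lateral annuli have boundary exactly on $\partial T_a\cup\partial T_b$); and (ii) your claim that every $S_i$ in a Haken system has $\chi<0$ is slightly off under the paper's definition, which permits $\partial$-parallel tori; however such tori can be isotoped off $V$, so the estimate is unaffected. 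Overall the proofs coincide in substance.
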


\begin{proof} If not, then by passing to a subsequence we may assume that 
$V\!ol(M_n)$ is bounded above by a positive constant $V$. It then follows 
from J\o rgensen's Theorem \cite[Theorem 5.12.1, p.\ 119]{Thurston} 
that there is a finite set $\{X_1,\ldots,X_r\}$ of compact, 
connected, orientable 3-manifolds with finite volume, hyperbolic interiors 
such that for each $M_n$ there is an $X_j$ such that $M_n$ is homeomorphic 
to the result of Dehn filling along some of the components of $\bd X_j$. 
Let $H$ be the maximum of the $h(X_j)$. Choose an $n$ such that $h(M_n)>H$. 
By the following lemma, which is stated in greater generality than needed here, 
we have that $h(M_n)\leq H$.\end{proof}

\begin{lem} Let $Q$ and $Q^*$ be compact, connected, orientable, irreducible, 
\birr\ 3-manifolds. Suppose $Q$ is obtained by Dehn filling along some of 
the boundary components of $Q^*$. Then $h(Q)\leq h(Q^*)$. 
\end{lem}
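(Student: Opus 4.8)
The plan is to start with a Haken system for $Q$ and push it into $Q^*$. Write $Q = Q^*\cup V_1\cup\cdots\cup V_k$, where the $V_\ell$ are the solid tori glued to the filled torus boundary components $T_1,\dots,T_k$ of $Q^*$, and let $\mathcal{F} = F_1\cup\cdots\cup F_m$ be a Haken system for $Q$, so $m = h(Q)$. First I would isotope $\mathcal{F}$ into good position with respect to $\bigcup_\ell V_\ell$: by the usual innermost-circle argument (using incompressibility of $\mathcal{F}$ and irreducibility of $Q$) one removes every trivial circle of $\mathcal{F}\cap\bigcup_\ell T_\ell$; the surviving circles of $\mathcal{F}\cap T_\ell$ are then essential and mutually parallel in $T_\ell$, so $\mathcal{F}\cap V_\ell$ is an incompressible surface properly embedded in the solid torus $V_\ell$ and is therefore a union of meridian disks and boundary-parallel annuli, the latter of which can be pushed out of $V_\ell$. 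After finitely many such moves $\mathcal{F}$ meets each $V_\ell$ in a (possibly empty) family of meridian disks, and in particular $\mathcal{F}\cap T_\ell$ consists of meridians of $V_\ell$.

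Set $F_i' = F_i\cap Q^*$ and $\mathcal{F}' = F_1'\cup\cdots\cup F_m'$. Since $F_i'$ is obtained from the connected surface $F_i$ by deleting finitely many disjoint open disks it is compact, connected, and clearly neither a sphere nor a disk (as $F_i$ is neither). Next I would check that each $F_i'$ is incompressible and boundary-incompressible in $Q^*$ by the standard argument: choose the good position above so as to minimize the total number of meridian disks of $\mathcal{F}$; if some $F_i'$ had a compressing (respectively boundary-compressing) disk in $Q^*$ whose boundary curve (respectively arc) remained essential for $F_i$ in $Q$, this would contradict incompressibility (respectively boundary-incompressibility) of $F_i$ in $Q$; otherwise the curve bounds a disk in $F_i$ meeting some meridian disks, and capping off and using irreducibility of $Q$ produces a ball across which $\mathcal{F}$ can be isotoped to reduce the number of meridian disks, contradicting minimality.

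The heart of the argument is that $\mathcal{F}'$ is still a Haken-type collection in $Q^*$: the $F_i'$ are pairwise non-parallel, and none is boundary-parallel. Suppose first $F_i'$ were parallel in $Q^*$ to $F_j'$ with $i\neq j$, witnessed by an ambient isotopy of $Q^*$ carrying $F_i'$ onto $F_j'$. Its restriction to each filled torus $T_\ell$ is an ambient isotopy of $T_\ell$, hence preserves the meridian slope, so by the isotopy extension theorem the whole isotopy extends to an ambient isotopy of $Q$; this extension carries the meridian disks $F_i\cap V_\ell$ onto the meridian disks $F_j\cap V_\ell$ and therefore carries $F_i$ onto a surface isotopic to $F_j$. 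Since $F_i$ and $F_j$ are then disjoint, isotopic, incompressible surfaces in the irreducible manifold $Q$, they cobound a product region and so are parallel in $Q$, contradicting that $\mathcal{F}$ is a Haken system. If instead $F_i'$ were parallel to a subsurface $P_0$ of $\partial Q^*$, the same construction realizes $F_i$, up to isotopy in $Q$, as $P_0$ together with the meridian disks capping $\partial P_0\cap\bigcup_\ell T_\ell$; but if some $P_0\cap T_\ell$ were nonempty this capped-off surface would have a spherical component (or $F_i$ would bound a solid torus and hence be compressible), which is impossible, so $P_0\subseteq\partial Q$ and $F_i$ is boundary-parallel in $Q$ — again a contradiction.

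Combining these facts, $\{F_1',\dots,F_m'\}$ is a family of $m$ compact, connected, properly embedded, incompressible, boundary-incompressible, pairwise non-parallel (and non-boundary-parallel) surfaces in $Q^*$, so $h(Q^*)\geq m = h(Q)$. The step I expect to be the main obstacle is the third paragraph: making precise how a parallelism in $Q^*$ extends over the filling solid tori. The isotopy-extension route sketched above seems cleanest, but one must verify that the various extensions across the different $T_\ell$ can be carried out simultaneously, and in the boundary-parallel case one must rule out the degenerate situations in which $P_0$ meets a filling torus. I expect these to be bookkeeping matters rather than conceptual ones.
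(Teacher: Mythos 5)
Your proof follows the same overall strategy as the paper's: isotope the Haken system so that it meets the filling solid tori in a minimal collection of meridian disks, set $F_i' = F_i\cap Q^*$, and verify that the resulting surfaces form a Haken system for $Q^*$. Your arguments for incompressibility and boundary-incompressibility coincide with the paper's (minimality of the number of meridian disks plus irreducibility). The one place where you take a different route is the non-parallel step. The paper argues directly on the product region: given $W^*=S_0^*\times[0,1]$ in $Q^*$, each annulus $A=W^*\cap\partial V_\ell$ cobounds a $3$-ball in $V_\ell$ together with two meridian disks of $S$, and gluing these balls onto $W^*$ extends the product structure to a parallelism between $S_0$ and $S_1$ in $Q$. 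You instead extend the ambient isotopy realizing the parallelism across the filling solid tori (using the fact that an isotopy of $\partial V_\ell$ starting at the identity extends over $V_\ell$ — this is not literally the isotopy extension theorem, but the extension does exist), conclude that $F_i$ and $F_j$ are ambiently isotopic in $Q$, and then invoke a Waldhausen-type criterion that disjoint, isotopic, incompressible, $\partial$-incompressible surfaces in an irreducible, $\partial$-irreducible manifold cobound a product. Both routes are correct; the paper's is more elementary and self-contained, while yours relies on a nontrivial external theorem. Finally, note that the paper's stated definition of Haken number asks only for incompressible, $\partial$-incompressible, pairwise non-parallel surfaces and does not require that no surface be boundary-parallel, and the paper's own proof accordingly checks only those three properties; so your extra boundary-parallel case is superfluous under the paper's conventions. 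That extra case is also the loosest part of your write-up (the enumeration of possibilities for $P_0\cap T_\ell$ is incomplete), but since it is not needed this does not affect the correctness of the argument.
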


\begin{proof} Let $V=V_1\cup\cdots\cup V_p$ be the union of the solid tori 
attached to $Q^*$ in order to get $Q$. Let $S=S_1\cup\cdots\cup S_{h(Q)}$ be 
a Haken system for $Q$. Isotop $S$ so that it meets $V$ in a collection of 
meridinal disks and the number of such disks is minimal. Let $S^*=S\cap Q^*$. 
Then $S^*$ is properly embedded in $Q^*$ and has $h(Q)$ components. It suffices 
to show that they are \inc, \binc, and pairwise non-parallel in $Q^*$. \\ 

\textit{$S^*$ is \inc\ in $Q^*$:} Suppose $D^*$ is a compressing disk for $S^*$ 
in $Q^*$. Then $\bd D^*=\bd D$ for a disk $D$ in $S$, and $D\cup D^*$ 
bounds 
a 3-ball $B$ in $Q$. Isotoping $D$ across $B$ and off $D^*$ reduces the 
number of components of $S\cap V$, contradicting minimality. \\

\textit{$S^*$ is \binc\ in $Q^*$:} Suppose $\Delta$ is a $\bd$-compressing 
bigon 
for $S^*$ in $Q^*$. Then $\bd\Delta=\alpha\cup\beta$, where $\alpha$ is a 
properly embedded  arc in $S^*$ and $\beta$ is a properly embedded spanning arc 
in an annulus $A$ in  $\bd V$ such that $\bd A=\bd D_0\cup\bd D_1$, where 
$D_0$ and $D_1$ are components of $S\cap V$. Let $E$ be the 3-ball in $V$ 
bounded by $A\cup D_0\cup D_1$. A regular neighborhood of $\Delta\cup E$ 
in $Q$ is a 3-ball across which one can isotop $S$ to remove $D_0$ and $D_1$ 
from $S\cap V$, again contradicting minimality. (Alternatively, one can show 
from this configuration that $S^*$ is compressible in $Q^*$.) \\

\textit{The components of $S^*$ are pairwise non-parallel in $Q^*$:} 
Suppose components $S^*_0$ and $S^*_1$ are parallel in $Q^*$. 
These surfaces are the intersections with $Q^*$ of components 
$S_0$ and $S_1$ of $S$. There 
is an embedding of $W^*=S^*_0\times [0,1]$ in $Q^*$ with 
$S^*_0=S^*_0\times\{0\}$, 
$S^*_1=S^*_0\times\{0\}$, and $\bd S^*_0\times[0,1]$ contained in $\bd Q^*$. 
Each component $A$ of $W^*\cap \bd V$ is an annulus for which there exists 
a 3-ball $B$ in $V$ with $A=B\cap W^*$ such that the closure of $\bd B-A$ 
consists of components of $S\cap V$. These $B$ allow one to extend the 
product structure on $W^*$ to a product stucture $W=S_0\times[0,1]$ in $Q$ 
with $S_0\times\{0\}=S_0$, $S_0\times\{1\}=S_1$, and $\bd S_0\times[0,1]$ 
contained in $\bd Q$, contradicting the fact that $S$ is a Haken system 
in $Q$. \end{proof} 

\begin{cor}  Let $Y_n$ be a sequence of compact, connected, orientable 
3-manifolds such that the complement $U_n$ of the torus boundary components of 
$Y_n$ is a finite volume hyperbolic manifold with totally geodesic boundary. \\ 
\begin{center}
If ${\ds\lim_{n\rightarrow\infty}h(Y_n)=\infty}$, then 
${\ds\lim_{n\rightarrow\infty}V\!ol(U_n)=\infty}$. 
\end{center}
\end{cor}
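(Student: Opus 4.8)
The plan is to reduce to Theorem A.1 by doubling along the totally geodesic boundary. First, for each $n$ I would set $\widehat{M}_n=Y_n$ if $\bd U_n=\ns$, and otherwise let $\widehat{M}_n$ be the double of $Y_n$ along its non-torus boundary components; write $N_n$ for the interior of $\widehat{M}_n$. In the first case $N_n=U_n$; in the second, $N_n$ is the double $DU_n$ of $U_n$ along $\bd U_n$. Since $\bd U_n$ is totally geodesic, reflection in it is an isometry of $U_n$, so the two hyperbolic structures fit together smoothly: $DU_n$ carries a complete, finite volume hyperbolic metric in which the doubling surface is totally geodesic, and $V\!ol(N_n)=2\,V\!ol(U_n)$. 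In either case $\widehat{M}_n$ is a compact, connected, orientable \tm\ with complete, finite volume hyperbolic interior $N_n$, and $V\!ol(U_n)\geq\tfrac12 V\!ol(N_n)$. So by Theorem A.1 it suffices to show ${\ds\lim_{n\to\infty}h(\widehat{M}_n)=\infty}$ (which gives $V\!ol(N_n)\to\infty$, hence $V\!ol(U_n)\to\infty$), and for this I would prove the inequality $h(\widehat{M}_n)\geq h(Y_n)$.

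When $\widehat{M}_n=Y_n$ this is trivial, so suppose $\bd U_n\neq\ns$ and write $T=\bd U_n$, a closed surface in the interior of $\widehat{M}_n$ separating it into the two copies $Y^{\pm}$ of $Y_n$. Given a Haken system $S_1\cup\cdots\cup S_{h(Y_n)}$ for $Y_n$, I would let $DS_i\sbs\widehat{M}_n$ be the surface obtained by doubling $S_i$ along the circles of $\bd S_i$ lying on $T$; it is compact, properly embedded with $\bd DS_i\sbs\bd\widehat{M}_n$, and the inclusion $S_i\hookrightarrow DS_i$ is $\pi_1$-injective (the doubling circles are essential in $S_i$, as $S_i$ is not a disk). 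The goal is then to show the $DS_i$ are \inc, \binc, and pairwise non-\pl\ in $\widehat{M}_n$, which yields $h(\widehat{M}_n)\geq h(Y_n)$ and finishes the proof.

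Establishing that last statement is where the real work lies. I would use that $\widehat{M}_n$ is \irr\ and \birr\ (it is hyperbolic) and that each component of $T$ is \inc\ in $\widehat{M}_n$ (it is totally geodesic). Given a compressing disk or $\bd$-compressing bigon for some $DS_i$, I would make it transverse to $T$, arrange that it meets $T$ only in circles (push its boundary off the $1$-manifold $DS_i\cap T$; the part of a bigon's boundary on $\bd\widehat{M}_n$ is already disjoint from $T$), then remove these circles by innermost-disk surgeries using incompressibility of $T$ and irreducibility of $\widehat{M}_n$; what remains lies in one of $Y^{\pm}$ and, via $\pi_1$-injectivity of $S_i\hookrightarrow DS_i$, yields a compression or $\bd$-compression of $S_i$ in $Y_n$ --- impossible. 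For pairwise non-parallelism (and for no $DS_i$ being \bpl), a product region $P\cong DS_i\times[0,1]$ meets $T$ in a properly embedded surface with boundary only on $DS_i\cup DS_j$; since $T$ is disjoint from $\bd\widehat{M}_n$ this surface has no vertical pieces, so after the standard normalization it is a union of horizontal copies of $DS_i$, and slicing $P$ along them exhibits a product region between $S_i$ and $S_j$ inside $Y_n$ --- contradicting that $S$ is a Haken system. The main obstacle is carrying out these cut-and-paste arguments cleanly, in particular the product-region analysis for non-parallelism, where one must justify the normalization of $T\cap P$ and check that the resulting sub-product lies on one side of $T$; everything else is routine.
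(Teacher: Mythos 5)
Your overall strategy is the paper's: double $Y_n$ along its non-torus boundary $F$ to get $\widehat{M}_n$ with $V\!ol = 2V\!ol(U_n)$, double the Haken system, apply Theorem A.1. Your treatment of incompressibility and $\bd$-incompressibility of the doubled surfaces is fine and is what the paper asserts. The gap is in the non-parallelism step, and the specific claims you make there are not correct. In a product $P\cong DS_i\times[0,1]$, ``vertical'' pieces are of two kinds: squares $\alpha\times I$ with $\alpha$ a properly embedded arc of $DS_i$, and annuli $c\times I$ with $c$ an essential simple closed curve in the \emph{interior} of $DS_i$. Disjointness of $T$ from $\bd\widehat{M}_n$ kills the squares (they hit $\bd DS_i\times I\subset\bd\widehat{M}_n$) but says nothing about the annuli, which live entirely in the interior. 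In fact, vertical annuli over the doubling circles $DS_i\cap T$ are exactly what one expects $T\cap P$ to be. Moreover, if $DS_i$ has boundary on the torus components, then a horizontal leaf $DS_i\times\{t\}$ \emph{also} meets $\bd\widehat{M}_n$, so your normalization cannot terminate in ``horizontal copies of $DS_i$'' in that case; and when $DS_i$ is closed (i.e.\ $\bd S_i\sbs F$), genuine horizontal closed pieces can appear and must be ruled out separately. Your final sentence -- ``slicing $P$ along [horizontal copies of $DS_i$] exhibits a product region between $S_i$ and $S_j$ inside $Y_n$'' -- does not follow from its own premise: slicing along horizontal leaves produces product regions in $\widehat{M}_n$ between copies of $DS_i$, not regions in $Y_n$ between $S_i$ and $S_j$; it is slicing along the \emph{vertical} annuli that produces the latter.

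The paper sidesteps the normalization of $T\cap P$ entirely by exploiting the $\mathbb{Z}/2$-symmetry of the double: it arranges the product region to be invariant under the reflection interchanging the two copies of $Y_n$, invokes Kim--Tollefson (\cite[Theorem A]{KT}) to say the restricted involution is equivalent to a product involution (so the fixed set $T\cap P$ consists of annuli), and then Waldhausen (\cite[Lemma 3.4]{W}) to isotope those annuli to product annuli $c\times I$. That puts $T\cap P$ exactly in the vertical-annulus position you want and yields the product region in $Y_n$. To rescue your direct cut-and-paste route you would need (i) a correct normalization lemma allowing both vertical annuli and horizontal closed pieces, (ii) an argument excluding or absorbing the horizontal pieces (e.g.\ an innermost one would exhibit a sub-product of $P$ between $DS_i$ and a component of $F$, which needs to be shown impossible or reduced away), and (iii) a check that the components of $P$ cut along the vertical annuli alternate sides of $T$ and glue to give a product between copies of $S_i$ and $S_j$ within one copy of $Y_n$. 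These are all doable but constitute exactly the ``real work'' you flagged; the equivariance argument is the shortcut the paper uses to avoid it.
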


\begin{proof} Let $M_n$ be the double of $Y_n$ along the union $F$ of 
its non-torus boundary components. Suppose $S$ is a Haken system in $Y_n$. 
Let $\widehat{S}$ be the double of $S$ along $S\cap F$. The incompressibility 
of $\widehat{S}$ in $M_n$ follows from the incompressibility of $F$ and 
$S$ in $Y_n$ and the $\bd$-incompressibility of $S$ in $Y_n$. The 
fact that $\bd M_n$ consists of tori then implies that $\widehat{S}$ is 
$\bd$-incompressible in $M_n$. Suppose two components $\widehat{S}_0$ and 
$\widehat{S}_1$ of $\widehat{S}$ are parallel in $M_n$ via a product 
$\widehat{S}_0\times[0,1]$. This product is invariant under the involution 
which interchanges the two copies of $Y_n$. By \cite[Theorem A]{Kim-Tollefson}  
the restriction of the involution is equivalent to a product involution. 
Hence the fixed point set consists of annuli. By \cite[Lemma 3.4]{Waldhausen} 
they 
are isotopic 
to product annuli. It follows that $S_0$ and $S_1$ are parallel in $Y_n$. 
Thus $h(M_n)\geq h(Y_n)$. The result then follows from the Theorem and the 
fact that the interior $N_n$ of $M_n$ is a complete, finite volume 
hyperbolic 3-manifold with $V\!ol(N_n)=2V\!ol(U_n)$. \end{proof}

\end{document}